\newtheorem{theorem}{Theorem}[section]
\newtheorem{thm}{Theorem}[section]
\newtheorem{conj}{Conjecture}[section]
\newtheorem{lemma}[thm]{Lemma}
\newtheorem{prop}[thm]{Proposition}
\theoremstyle{definition}
\newtheorem{definition}[thm]{Definition}
\newtheorem{defn}[thm]{Definition}
\newtheorem{remark}[thm]{Remark}
\numberwithin{equation}{section}
\newcommand{\R}{\mathbb{R}}
\DeclareMathOperator{\dist}{dist}
\newcommand{\hind}{\mathrm{ind}}
\newcommand{\dbl}{\mathrm{dbl}}
\begin{document}



\title{Bounded distance geodesic foliations in Riemannian planes}


\author[J.~Ge]{Jian Ge*}
\address[Ge]{Beijing International Center for Mathematical Research, Peking University, Beijing 100871, P. R. China.}
\email{jge@math.pku.edu.cn}
\thanks{*Partially supported by NSFC}


\author[L.~Guijarro]{Luis Guijarro**}
\address[Guijarro]{ Department of Mathematics, Universidad Aut\'onoma de Madrid, and ICMAT CSIC-UAM-UCM-UC3M, Spain.}
\curraddr{}
\email{luis.guijarro@uam.es}
\thanks{**Supported by research grants  
MTM2017-85934-C3-2-P from the MINECO, by ICMAT Severo Ochoa Project SEV-2015-0554 (MINECO) and by the Severo Ochoa Programme for Centers of Excellence in R\&D, CEX2019-000904-S (MICINN)}

\date{\today}


\subjclass[2000]{Primary: 53C23; Secondary: 53C20, 57N10}
\keywords{Riemannian plane,  geodesic foliation, conjugate points, Burns-Knieper conjecture}


\begin{abstract}
A conjecture of  Burns-Knieper \cite{BK1991} asks whether a 2-plane with a metric without conjugate points, and with a geodesic foliation whose lines are at bounded Hausdorff distance, is necessarily flat. We prove this conjecture in two cases: under the hypothesis that the plane admits total curvature,  and under the hypothesis of visibility at some point. Along the way, we show that  all geodesic line foliations on a Riemannian 2-plane must be homeomorphic to the standard one.

\end{abstract}
\maketitle




\section{Introduction}
A complete Riemannian manifold $(M, g)$ is called without conjugate points if no geodesic on $M$ contains a pair of mutually conjugate points. The class of Riemannian manifolds without conjugate points contains the set of Riemannian manifolds with nonpositive curvature as a proper subset. E. Hopf \cite{Hop1948} proved that any metric without conjugate points on the 2-dimensional torus $T^{2}$ is necessarily flat. In fact, E. Hopf proved that the total curvature of such manifold must be non-positive, hence by the Gauss Bonnet Theorem, the metric must be flat. In \cite{Gre1958}, Green generalized Hopf's idea to higher dimensions  by showing that in any \emph{closed} Riemannian manifold without conjugate points, the integral of its scalar curvature is  nonpositive .

Although there are examples of manifolds without conjugate points having positive sectional curvatures at some point, there are many properties holding for non-positively curved metrics that also hold for metrics without conjugate points. However this is not always the case. Recall that a smooth curve $\gamma: (-\infty, \infty)\to M$ in a Riemannian manifold $M$ is called a line if $\dist(\gamma(t), \gamma(s))=|t-s|$ for all $t, s\in \mathbb R$. The Flat Strip Theorem for a simply connected surface $M$ of nonpositive curvature states that any two lines $\sigma$ and $\gamma$ in $M$ with bounded Hausdorff distance must bound a flat strip, i.e., there exists an isometric embedding from $I\times \mathbb R$ to $M$ for some closed interval $I\subset \mathbb R$ such that $\sigma$ and $\gamma$ are the boundaries of the image. However in \cite{Bur1992}, Burns constructed an example showing that the flat strip theorem does not hold for metric without conjugate points. Nonetheless, his construction does not violate the following conjecture proposed in \cite{BK1991}:
\begin{conj}\label{conj:main}
Let $M$ be a simply connected surface with a complete Riemannian metric with no conjugate points. Suppose that $\mathcal F$ is a foliation on $M$ whose leaves are all geodesic lines, and any two of which are at finite Hausdorff distance. Then $M$ is flat.
\end{conj}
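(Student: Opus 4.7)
The plan is to use the topological classification of the foliation to set up adapted global coordinates, translate the bounded Hausdorff distance hypothesis into a uniform bound on transverse Jacobi fields along leaves, and then extract vanishing of Gaussian curvature by a two-sided Riccati analysis.

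First, I would invoke the paper's earlier structure result that any geodesic line foliation $\mathcal{F}$ on a Riemannian $2$-plane is homeomorphic to the standard foliation of $\RR^2$, producing global coordinates $(s,t)\in\RR^2$ in which each leaf $\gamma_s$ is the set $\{s=\mathrm{const}\}$ parametrized by arc length. The transverse variation $J_s(t):=\partial_s\gamma_s(t)$ is then a Jacobi field along $\gamma_s$. Since $M$ has no conjugate points, $J_s$ never vanishes, and its normal component $u_s(t)$ has constant sign, which I take to be positive. The hypothesis that any two leaves lie at bounded Hausdorff distance, combined with the no-crossing property of a foliation, forces $u_s$ to be bounded above and below by positive constants along the entire line $t\in\RR$.

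Next, I would run the Riccati machinery. Setting $v_s:=u_s'/u_s$, the Jacobi equation $u_s''+Ku_s=0$ becomes
\begin{equation*}
v_s'+v_s^{\,2}+K(\gamma_s(t))=0.
\end{equation*}
Boundedness of $\log u_s$ immediately gives the averaged identity $\int_{-T}^{T}v_s(t)\,dt=O(1)$. At every point of a surface without conjugate points one also has forward- and backward-asymptotic Jacobi solutions, producing two Riccati functions $v_s^{+}\ge v_s^{-}$ along each leaf; the bounded-distance hypothesis applied to both ends of the leaf-line should force $v_s^{+}+v_s^{-}$ to remain bounded. Integrating the Riccati equation over $[-T,T]$ and letting $T\to\infty$ then gives $\int\bigl((v_s^{\pm})^{2}+K\bigr)\,dt<\infty$, and combining the forward and backward identities forces $v_s^{+}\equiv v_s^{-}\equiv 0$ along each leaf, whence $K\equiv 0$ on all of $M$ since every point lies on some leaf.

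The main obstacle will be pointwise, as opposed to merely averaged, control on $v_s^{\pm}$ extracted from the purely asymptotic bounded Hausdorff distance hypothesis. The two special cases treated in this paper succeed for independent reasons: finite total curvature produces an $L^{1}$ bound on $K$ along most leaves via Gauss--Bonnet on strips between pairs of leaves, and visibility at one point forces the stable and unstable horocyclic foliations to coincide along a single geodesic. In the unconditional conjecture, $K$ could in principle oscillate along a leaf in such a way that $u_s$ remains bounded while $v_s$ oscillates without settling near zero. Overcoming this would require a new ingredient, plausibly a quasi-isometric compactness statement extracted from the uniform finite Hausdorff distance between all leaves, which upgrades the averaged vanishing of $v_s^{\,2}+K$ to the pointwise identity that yields $K\equiv 0$. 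This is where I expect the essential analytic difficulty to lie, and it is the reason the conjecture has remained open in full generality.
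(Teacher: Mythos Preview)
The statement you are attempting is Conjecture~\ref{conj:main}, which the paper explicitly leaves open; only the conditional Theorems~\ref{thm:flat} and~\ref{thm:visib_foliation} are established. So there is no paper proof of this statement to compare against, and your text is best read as a strategy for the open problem.

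The decisive gap sits earlier than where you place it. You assert that finite Hausdorff distance between any two leaves ``forces $u_s$ to be bounded above and below by positive constants along the entire line $t\in\RR$.'' This is not justified. Finite Hausdorff distance between $\gamma_s$ and $\gamma_{s'}$ bounds $\sup_t d(\gamma_s(t),\gamma_{s'})$, which along a transversal is an integral of the form $\int_s^{s'} u_\sigma\,d\sigma$, not $u_s$ itself; nothing rules out $u_s(t_k)\to\infty$ while nearby $u_\sigma$ compensate. The first-order estimate $d(\gamma_s(t),\gamma_{s+\varepsilon})\approx\varepsilon\,u_s(t)$ is not uniform in $t$, so one cannot pass from finiteness of $d_H$ at each fixed $\varepsilon$ to $\sup_t u_s(t)<\infty$. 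The lower bound is weaker still: leaves becoming asymptotic at one end ($u_s(t)\to 0$) is fully compatible with finite Hausdorff distance, and then $v_s=u_s'/u_s$ need not remain bounded. Without two-sided pointwise control of $u_s$, the Riccati integration that follows has no starting point, well before the averaged-versus-pointwise issue you flag at the end.

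Your description of how the paper treats its two special cases is also inaccurate. Neither argument uses Riccati equations, $L^1$ curvature bounds along strips, or horocyclic foliations. Instead the paper produces, for any leaf $\gamma\in\mathcal F$, a disjoint line $\sigma$ with $d(\gamma(t),\sigma(t))\to\infty$ at both ends (a hyperbolic or weak hyperbolic pair), via the Tits-boundary structure when $c(M)$ exists and directly from the visibility axiom otherwise. Combined with Theorem~\ref{thm:foliation}, the leaf through a point on the far side of $\sigma$ would have to cross $\sigma$ twice in order to stay within bounded distance of $\gamma$, contradicting Lemma~\ref{lem:line properties}. This is a global topological obstruction, quite different in spirit from the local Jacobi-field analysis you propose.
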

In \cite{BK1991}, the authors proved the flatness of $M$ under the extra assumption that for any given line $\gamma$ there exists a bounded Hausdorff distance line foliation $\mathcal F$ containing $\gamma$.

In this paper, we confirm the above conjecture 
in two different cases. 
 
\begin{theorem}\label{thm:flat}
If $M$ admits  total curvature, then Conjecture \ref{conj:main} holds.
\end{theorem}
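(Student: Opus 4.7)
The plan is to extract a bounded positive Jacobi field from the foliation, combine it with a Hopf-style Riccati argument along each leaf, and couple this with Gauss-Bonnet on strips to force $\int_M K\,dA = 0$ and then pointwise flatness. By the structure theorem announced in the abstract, $\mathcal F$ is homeomorphic to the standard line foliation on $\mathbb R^2$, so one obtains global coordinates $(s,t)$ in which the leaves are $\{s=\text{const}\}$, parametrized by arclength in $t$. The variation field $\partial/\partial s$ restricts to a Jacobi field $J_s$ along each leaf, and the absence of conjugate points forces its normal component, say $u_s(t)$, to be strictly positive everywhere. Bounded Hausdorff distance between leaves, after normalizing the $s$-coordinate along a suitably chosen transversal, should give the uniform bound $\sup_t u_s(t) < \infty$ on any compact interval in $s$, and I will need the corresponding positive lower bound to run the Riccati step below.

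Along each leaf, $u=u_s$ satisfies $u_{tt}+Ku=0$, hence $v:=u_t/u$ satisfies the Riccati equation $v_t=-K-v^2$. Integrating gives
\[
\int_{-T}^{T} K\,dt + \int_{-T}^{T} v^2\,dt = v(-T) - v(T),
\]
and boundedness of $\log u$ (which makes $\int_{-T}^{T} v\,dt = \log u(T)-\log u(-T)$ uniformly bounded) produces, via the standard Hopf averaging, the inequality $\int_{\gamma_s} K\,dt \le 0$ along each leaf in an averaged sense. In parallel, for each strip $S_{a,b}=\{a\le s\le b\}$ I apply Gauss-Bonnet to the truncations $R_T = S_{a,b}\cap\{|t|\le T\}$: the two geodesic boundaries contribute no geodesic curvature, bounded Hausdorff distance controls the length of the transversal pieces at $|t|=T$, and the hypothesis that $M$ admits total curvature guarantees that $\int_{R_T} K\,dA\to\int_{S_{a,b}} K\,dA$. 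Passing to the limit in Gauss-Bonnet, the boundary contributions must conspire to give $\int_{S_{a,b}} K\,dA = 0$; exhausting $M$ by such strips then yields $\int_M K\,dA=0$.

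The equality case in the Riccati/Hopf inequality along each leaf then forces $v\equiv 0$, so $u_t\equiv 0$ and $K = -u_{tt}/u \equiv 0$, giving $M$ flat. The main obstacle will be the careful extraction of the boundary contributions in Gauss-Bonnet at infinity: the transversal pieces at $|t|=T$ need not have controlled geodesic curvature a priori, and the interplay between bounded Hausdorff distance, no conjugate points, and the existence of total curvature must be used delicately to guarantee the right limit. Converting bounded Hausdorff distance into genuine pointwise two-sided bounds on $u_s$ in the first step is also a nontrivial issue that may require choosing the transverse coordinate $s$ with care, perhaps via an orthogonal transversal to the foliation and a Wronskian comparison with other solutions of the Jacobi equation.
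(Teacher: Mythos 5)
Your route (a variation Jacobi field along the leaves, a Riccati/Hopf argument, and Gauss--Bonnet on strips) is genuinely different from the paper's, which is much softer: the paper first shows $c(M)\le 0$ with equality forcing flatness (Proposition \ref{prop:Hopf}, via Shioya's ideal boundary together with a rigidity theorem for area growth), and then rules out $c(M)<0$ by producing a \emph{hyperbolic pair} $(\gamma,\sigma)$ of mutually diverging lines (Proposition \ref{prop:total_hyp}) and observing that the leaf through a point on the far side of $\sigma$ must stay within bounded distance of $\gamma$, hence would cross $\sigma$ twice --- impossible for geodesics in a plane without conjugate points. No Jacobi field analysis is needed there. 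Your proposal, by contrast, has gaps that are not merely technical, and the two issues you flag at the end as ``nontrivial'' are in fact where the whole difficulty of the conjecture lives.

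Concretely: (1) The field $\partial/\partial s$ need not exist as a Jacobi field. The foliation is only assumed to be a foliation by geodesics, and Theorem \ref{thm:foliation} gives a \emph{homeomorphism} to the standard foliation, not a diffeomorphism; geodesic foliations are in general not transversally $C^1$, so there is no variation field $J_s$ to differentiate. One would have to substitute stable Jacobi fields or Busemann functions, which changes the argument. (2) Bounded Hausdorff distance between leaves gives neither the upper nor the lower pointwise bound on $u_s$ that the Riccati step requires: the length of a transversal arc between two leaves only bounds their distance from \emph{above}, and boundedness of $d(\gamma_s(t),\gamma_{s'})$ for each fixed pair $s,s'$ does not control the infinitesimal quantity $u_s(t)$, whose bound may degenerate as $s'\to s$. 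The positive lower bound is essentially the flat strip statement itself; Burns's example \cite{Bur1992} shows two lines at bounded Hausdorff distance in a plane without conjugate points need not bound a flat strip, so assuming two-sided control of $u_s$ is perilously close to assuming the conclusion. (3) The Gauss--Bonnet limit is unjustified: the geodesic curvature of the transversal arcs at $|t|=T$ is not controlled, and the claim that the boundary terms ``conspire'' to give $\int_{S_{a,b}}K\,dA=0$ rather than merely $\le 0$ is exactly where noncompactness bites --- note that even in the paper the equality case $c(M)=0\Rightarrow$ flat is not obtained by a pointwise equality-case analysis but by a separate rigidity theorem. As it stands, the proposal is a program whose hardest steps are left open rather than a proof.
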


\begin{theorem}\label{thm:visib_foliation}
If $M$ satisfies the visibility axiom from a given point $p$, then $M$ does not admits a line foliation with bounded Hausdorff distance. In particular if $M$ is the universal covering of a closed surface, then Conjecture \ref{conj:main}  holds.
\end{theorem}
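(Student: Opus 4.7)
The plan is to argue by contradiction: suppose $M$ admits a geodesic line foliation $\mathcal{F}$ whose leaves are pairwise at finite Hausdorff distance, and derive a contradiction from the visibility axiom at $p$.

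The first step is to show that all leaves of $\mathcal{F}$ share a common pair of endpoints at infinity. Given leaves $L_{1}, L_{2}$ at Hausdorff distance at most $D$, parameterize them by unit-speed geodesics $\gamma_{1}, \gamma_{2}$ and choose $s(t)$ with $\dist(\gamma_{1}(t), \gamma_{2}(s(t))) \leq D$. Since both $\gamma_{i}$ are lines, the triangle inequality forces $|s(t) - s(t')| = |t - t'| + O(1)$, so $s(t) \to \pm\infty$ as $t \to \pm\infty$; after possibly reversing the orientation of $\gamma_{2}$, the forward (resp.\ backward) rays of the two leaves stay at bounded distance and therefore define the same endpoint at infinity. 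Denote the resulting common endpoints $\xi^{+}$ and $\xi^{-}$.

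Next, I would use visibility at $p$ to force every leaf into a fixed compact ball. The visibility axiom, together with the absence of conjugate points, supplies a unique geodesic ray from $p$ to each point at infinity, with distinct endpoints giving distinct initial directions; hence the angle $\theta_{0} := \angle_{p}(\xi^{+}, \xi^{-})$ is strictly positive. Apply the visibility axiom with $\epsilon = \theta_{0}/2$ to obtain $R > 0$ such that any geodesic segment disjoint from $\bar{B}(p, R)$ subtends angle less than $\theta_{0}/2$ at $p$. If some leaf $L$ were disjoint from $\bar{B}(p, R)$, exhausting $L$ by segments and passing to the limit would give $\theta_{0} = \angle_{p}(\xi^{+}, \xi^{-}) \leq \theta_{0}/2$, a contradiction; hence every leaf meets $\bar{B}(p, R)$.

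To close the argument, invoke the structure theorem established earlier in the paper, which identifies every geodesic line foliation on a Riemannian plane with the standard horizontal foliation of $\RR^{2}$ up to homeomorphism. Consequently the leaf space $M/\mathcal{F}$ is homeomorphic to $\RR$ and the quotient projection $\pi \colon M \to \RR$ is continuous, so $\pi(\bar{B}(p, R))$ is compact, hence bounded, in $\RR$; yet the previous step forces $\pi(\bar{B}(p, R)) = \RR$, the desired contradiction. For the second assertion, when $M$ is the universal cover of a closed surface without conjugate points and of genus at least two, a classical Eberlein-type argument exploiting the cocompact action of $\pi_{1}$ shows that the visibility axiom holds at every point, so the first part applies; the torus case is covered by Hopf's theorem. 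The principal obstacle is the rigorous treatment of the first two steps, which demands a consistent notion of boundary at infinity and of angles at $p$ built from only no conjugate points and visibility at the single point $p$ (rather than from, say, nonpositive curvature), together with the unique-ray property that makes $\theta_{0}$ well-defined and strictly positive; once this foundation is in place, the final step is purely topological.
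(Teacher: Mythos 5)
Your strategy is genuinely different from the paper's: you try to trap every leaf in a fixed ball $\bar B(p,R)$ and then contradict Theorem \ref{thm:foliation} (leaf space homeomorphic to $\RR$, so $\pi(\bar B(p,R))$ cannot be all of $\RR$), whereas the paper builds, for a given leaf $\gamma$, an auxiliary line $\sigma$ forming a weak hyperbolic pair (Proposition \ref{prop:visi_hyp}) and shows that a leaf at bounded Hausdorff distance from $\gamma$ but on the far side of $\sigma$ would have to cross $\sigma$ twice. Your final topological step is correct and clean, and the reduction of the second assertion to Hopf's theorem and Eberlein's visibility result matches the paper. The problem is that the two steps you yourself flag as ``the principal obstacle'' contain genuine gaps, not routine verifications, in the setting of no conjugate points plus visibility at a single point. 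First, to define $\theta_0=\measuredangle_p(\xi^+,\xi^-)$ you need each asymptote class $\xi^{\pm}$ determined by the leaf-rays to be represented by a geodesic ray \emph{emanating from $p$} that stays at bounded distance from the forward (resp.\ backward) rays of the leaves. Without nonpositive curvature or absence of focal points this is not automatic: the co-ray from $p$ (the limit of the segments $[p,L(t)]$) exists by compactness, but nothing guarantees it remains at bounded distance from $L|_{[0,\infty)}$; Burns' example is a warning that asymptote behaviour degenerates once one only assumes no conjugate points. Proposition \ref{prop:boundary} of the paper gives only injectivity of $\Gamma_p\to M_a(\infty)$, not the surjectivity onto classes of arbitrary rays that your construction requires. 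Second, even granting rays $\rho^{\pm}$ from $p$ representing $\xi^{\pm}$, the passage from $\measuredangle_p(L(-t),L(t))\le\theta_0/2$ to $\measuredangle_p(\xi^+,\xi^-)\le\theta_0/2$ needs the initial directions of $[p,L(\pm t)]$ to converge to $\rho^{\pm\,\prime}(0)$, which is again a co-ray convergence statement that must be proved, not assumed.

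The paper's route sidesteps both issues by never leaving the set $\Gamma_p$ of rays based at $p$: it chooses two rays $\alpha_{\pm}$ from $p$ inside one half-plane bounded by $\gamma$, uses visibility to produce the line $\sigma$ as a sublimit of the chords joining $\alpha_-(t_k)$ to $\alpha_+(t_k)$ (Proposition \ref{prop:line}), and only needs that $\gamma$ and $\alpha_+$ are \emph{not} asymptotic (Proposition \ref{prop:boundary}) to obtain divergence along a sequence $t_k$. To complete your proposal you would either have to establish the representability and angle-convergence statements above under visibility at $p$ alone, or replace your ``every leaf meets $\bar B(p,R)$'' step by an argument of the weak-hyperbolic-pair type. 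As written, the proof is incomplete.
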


Our approach to the conjecture can be divided into two parts. In the first part, we study the topology of the foliation $\mathcal F$. General foliations on Riemannian surfaces by curves could be very complicated, cf \cite{Kap1940, Kap1941, HR1957},  even after assuming that the leaves are geodesics. However, if we assume the leaves are lines, we have
\begin{theorem}\label{thm:foliation}
Let $(M^{2}, g)$ be a complete Riemannian surface homeomorphic to $\mathbb R^{2}$, and let $\mathcal F$ be a geodesic line foliation on $M$. Then $\mathcal F$ is homeomorphic to the standard straight line foliation on the Euclidean plane $\mathbb R^{2}$.
\end{theorem}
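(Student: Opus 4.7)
The plan is to reduce the assertion to the classical topological classification of plane foliations by lines of Kaplan~\cite{Kap1940, Kap1941} (see also Haefliger--Reeb~\cite{HR1957}): a foliation of $\mathbb{R}^{2}$ by properly embedded lines is topologically conjugate to the standard parallel foliation if and only if its leaf space is Hausdorff. Since each leaf of $\mathcal{F}$ is a geodesic line, it is a properly embedded closed copy of $\mathbb{R}$ in $M \cong \mathbb{R}^{2}$, so it suffices to prove that the leaf space $M/\mathcal{F}$ is Hausdorff.

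A preliminary step is to verify that the tangent line field $\ell$ of $\mathcal{F}$ is continuous. Suppose $p_n \to p$ and $v_n$ is a unit vector tangent to the leaf $L_{p_n}$ at $p_n$; after passing to a subsequence, $v_n \to v' \in T_p M$. By continuity of the geodesic flow, $\exp_{p_n}(s v_n) \to \exp_{p}(s v')$ uniformly on compact $s$-intervals; on the other hand, in a flowbox around $p$ the nearby leaves $L_{p_n} \cap U$ converge as sets to $L_p \cap U$, so the limiting arc $\{\exp_p(s v'): |s|\le\epsilon\}$ lies inside $L_p$. Two geodesics through $p$ sharing a non-trivial subarc have equal initial directions up to sign, hence $v' = \pm u_p$ for the unit tangent $u_p$ to $L_p$ at $p$.

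The Hausdorffness of $M/\mathcal{F}$ is then proved by contradiction. Suppose that two distinct leaves $L_{1}, L_{2}$ are non-separated in the leaf space. Then there exist $a \in L_{1}$, $b \in L_{2}$, a sequence of leaves $\eta_n$, and points $a_n, b_n \in \eta_n$ with $a_n \to a$ and $b_n \to b$; disjointness of the leaves forces $a \neq b$. Parameterize $\eta_n$ by arclength with $\eta_n(0) = a_n$, $\dot{\eta}_n(0) = v_n$, and write $b_n = \exp_{a_n}(s_n v_n)$. Because $\eta_n$ is distance-realizing, $|s_n| = \dist(a_n, b_n) \to \dist(a,b)$ is bounded. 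Passing to a subsequence with $v_n \to v$ and $s_n \to s_{\infty}$, the continuity of $\ell$ established above forces $v = \pm u_{1}$ for the unit tangent $u_1$ to $L_{1}$ at $a$, and then
\[
b = \lim_{n\to\infty} \exp_{a_n}(s_n v_n) = \exp_{a}(s_{\infty} v) \in L_{1},
\]
contradicting $L_{1} \cap L_{2} = \emptyset$. Thus $M/\mathcal{F}$ is Hausdorff, and Kaplan's theorem yields the claim.

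I expect the main obstacle to be this Hausdorffness step, where the geodesic \emph{line} hypothesis (rather than merely a foliation by geodesics) is essential: without the distance-realizing property, $\dist(a_n, b_n)$ would not control arclength along $\eta_n$ and the bounded-parameter limit driving the contradiction would be unavailable.
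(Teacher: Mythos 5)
Your proof is correct, and while it reduces to the same external input as the paper (the Kaplan/Haefliger--Reeb correspondence, so that everything hinges on Hausdorffness of the leaf space), the way you establish Hausdorffness is genuinely different. The paper first proves a ``betweenness'' lemma --- a leaf through an interior point of the strip bounded by two other leaves must separate that strip --- by a Poincar\'e--Hopf index computation on the double of a geodesic hexagon (a non-separating leaf would force a line field on $S^2$ with total index $3\neq 2$), and then uses a geodesic transversal between two given leaves to produce disjoint saturated neighborhoods. You instead argue by contradiction directly at the level of limits: a sequence of leaves accumulating on two distinct leaves $L_1,L_2$ gives points $a_n\to a\in L_1$, $b_n\to b\in L_2$ on a single leaf $\eta_n$, and the distance-realizing property bounds the arclength parameter $|s_n|=\dist(a_n,b_n)$, so after extracting limits the whole configuration converges to a single geodesic through $a$ tangent to $L_1$ and containing $b$ --- contradicting disjointness of leaves. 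Your route is more elementary (no index theory) and isolates precisely where the \emph{line} hypothesis, as opposed to a mere geodesic foliation, is used; the paper's route yields as a by-product the separation/order structure of strips $S^{\sigma}_{\gamma}$, which it then reuses in the proofs of the main flatness theorems. One cosmetic point: in your continuity-of-the-line-field step, what converges is the plaque of $L_{p_n}$ through $p_n$ to the plaque of $L_p$ through $p$ (not the full traces $L_{p_n}\cap U$ as sets), but that is exactly what your argument needs and is standard foliation theory.
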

Note that \autoref{thm:foliation} does not require a metric without conjugate points on $M$, so it might have an independent interest and could  be applied to other situations.

The second part of our approach consists of a close study of compactifications of $M$ by the introduction of equivalence relations on the set of rays, which is an effective tool to control the global geometry of $M$. Such a tool has already proved to be useful in \cite{GGS2019}. In this paper, we pursue this direction further. We need the following pair of definitions.

\begin{definition}
Let $M$ be a complete Riemannian surface homeomorphic to $\mathbb R^{2}$. Let $\gamma_{i}:\mathbb R\to M$, $i=1, 2$ be two non-intersecting lines in $M$.  Then $\gamma_{1}$ and $\gamma_{2}$ bounds a region homeomorphic to the band $\mathbb R\times [0, 1]$, which has two ends. By changing the orientation of $\gamma_{2}$ if necessary, we assume $\gamma_{1}(t)$ and $\gamma_{2}(t)$ belong to the same end of the band as $t\to +\infty$. 

\begin{enumerate}
\item
We call $(\gamma_{1}, \gamma_{2})$ a \emph{ hyperbolic pair } if 
$$\lim_{t\to+ \infty} d(\gamma_{1}(t), \gamma_{2}(t))=\infty, \qquad
\lim_{t\to -\infty} d(\gamma_{1}(t), \gamma_{2}(t))=\infty,
$$

\item
We call $(\gamma_1,\gamma_2)$ a \emph{weak hyperbolic pair} if the above inequalities hold for some sequence $t_k\to\infty$. 
\end{enumerate}
\end{definition}

It is clear that a hyperbolic pair is a weak hyperbolic pair, but the reciprocal does not need to hold.
The importance of this definition lies in the following  Propositions.

\begin{prop}\label{prop:total_hyp}
Let $M$ be a complete Riemannian surface homeomorphic to $\mathbb R^{2}$ without conjugate points. Suppose the total curvature of $M$ exists. If $c(M)<0$, then for any line $\gamma$ in $M$,  there exists a line $\sigma$ such that $(\gamma ,\sigma)$ is a hyperbolic pair.
\end{prop}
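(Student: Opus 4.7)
\emph{Proof plan.} Since $\gamma$ is a line in the simply connected surface $M\cong\mathbb{R}^{2}$, its complement decomposes as $M\setminus\gamma=H^{+}\sqcup H^{-}$ into two open half-planes, and by additivity of the total curvature across the measure-zero set $\gamma$,
\[
c(H^{+})+c(H^{-})=c(M)<0,
\]
so at least one side, which I may take to be $H^{+}$, satisfies $c(H^{+})<0$. I look for the desired $\sigma$ inside $H^{+}$.

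The candidate $\sigma$ will be constructed as a limit of geodesic lines going to infinity inside $H^{+}$. Concretely, I would pick a sequence of points $p_{n}\in H^{+}$ with $d(p_{n},\gamma)\to\infty$ whose nearest-point projection on $\gamma$ remains in a bounded neighborhood of $\gamma(0)$, and take $\sigma_{n}$ to be a geodesic through $p_{n}$ whose tangent is perpendicular to a minimizing segment from $p_{n}$ to $\gamma$. A diagonal Arzel\`a--Ascoli extraction in the unit tangent bundle, combined with the stability of geodesic lines under $C^{0}$-limits guaranteed by \autoref{thm:foliation} and the absence of conjugate points, yields a subsequential limit $\sigma:\mathbb{R}\to M$ which is a complete geodesic line contained in $H^{+}$.

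To verify that $(\gamma,\sigma)$ is a hyperbolic pair I argue by contradiction via Gauss--Bonnet. Suppose the strip $S$ bounded by $\gamma$ and $\sigma$ fails to diverge at, say, the $+\infty$ end, so that $d(\gamma(t),\sigma(t))\leq D$ for all $t\geq 0$. For any $T>0$, close off the compact subregion $R_{T}\subset S$ corresponding to parameters $t\in[-T,T]$ by minimizing geodesic segments $c_{T}^{\pm}$ joining $\gamma(\pm T)$ to $\sigma(\pm T)$. Since $\gamma$, $\sigma$ and $c_{T}^{\pm}$ are geodesics, Gauss--Bonnet applied to $R_{T}$ gives
\[
\int_{R_{T}} K\,dA + \sum_{i=1}^{4}(\pi-\theta_{i}) = 2\pi,
\]
whence $\int_{R_{T}} K\,dA\geq -2\pi$. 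Letting $T\to\infty$ yields $c(S)\geq -2\pi$. By arranging the construction so that the strip $S$ captures a portion of $H^{+}$ with curvature contribution strictly below $-2\pi$---using that $c(H^{+})<0$ together with an iterative peeling procedure (replace $\gamma$ by $\sigma$ and repeat in the region between $\sigma$ and infinity)---one eventually obtains a contradiction, so some intermediate stage must already produce a hyperbolic pair.

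The main obstacle I expect is ensuring that the limit geodesic $\sigma$ is genuinely a line: in the no-conjugate-points setting, $C^{0}$-limits of lines need not remain lines without extra structural control, and this is where \autoref{thm:foliation} is essential. The secondary obstacle is quantitative: the bound $c(S)\geq -2\pi$ does not directly contradict $c(H^{+})<0$, so the peeling procedure must be set up carefully to exhaust enough of the negative curvature inside a single strip---straightforward if $c(M)=-\infty$ but requiring a combinatorial accounting argument when $c(M)$ is finite and close to $0$.
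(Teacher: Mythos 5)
Your proposal diverges substantially from the paper's argument, and it has two gaps that I do not see how to close. First, the construction of $\sigma$ does not produce a line: you take geodesics $\sigma_n$ through points $p_n$ with $d(p_n,\gamma)\to\infty$, but since the base points escape every compact set, Arzel\`a--Ascoli gives no subconvergence in the unit tangent bundle --- the $\sigma_n$ may simply drift to infinity and leave no limit object in $M$. (A side remark: your worry that a limit of lines might fail to be a line is moot here, since by Lemma \ref{lem:line properties} every complete geodesic in a plane without conjugate points is automatically a line; and Theorem \ref{thm:foliation} is about foliations and plays no role in Proposition \ref{prop:total_hyp}.) Second, even granting a line $\sigma\subset H^{+}$, your Gauss--Bonnet step only yields $c(S)\geq -2\pi$ for the strip, which, as you concede, is perfectly consistent with $c(H^{+})<0$: a ``peeling'' iteration does not help, because each peeled strip can contribute total curvature as close to $0$ as one likes, so no finite or infinite accounting forces a contradiction when $c(M)$ is finite and small in absolute value. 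Moreover, non-divergence of the pair only gives a sequence $t_k$ along which $d(\gamma(t_k),\sigma(t_k))$ stays bounded, not a uniform bound $d(\gamma(t),\sigma(t))\leq D$ for all $t\geq 0$, so even the setup of your quadrilaterals $R_T$ needs repair.

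The paper's proof runs through Shioya's ideal boundary: since $c(M)<0$, either $S(\infty)$ is a circle of length $\ell>2\pi$ (Proposition \ref{prop:shioya1}) or $\lambda_\infty=\infty$, and in either case one locates two ideal points $x,y$ on one side of $\gamma$ with $d_\infty(x,y)>\pi$ and at positive Tits distance from $\gamma(\pm\infty)$; Proposition \ref{prop:PropOfLine} then supplies a line $\sigma$ joining $x$ to $y$, and the divergence $d(\gamma(t),\sigma(t))\to\infty$ at both ends follows precisely from these positive Tits distances (with Lemma \ref{lem:key} handling the case $c(M)=-\infty$ via the intervals $I_0$, $I_\pi$, $I_c$). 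That machinery is what simultaneously produces the line and certifies the hyperbolicity of the pair; your approach would need a substitute for both functions, and as written it provides neither.
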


\begin{prop}\label{prop:visi_hyp}
Let $M$ be a complete Riemannian surface homeomorphic to $\mathbb R^{2}$ without conjugate points satisfying the visibility axiom from some point $p$. Then for any line $\gamma$ in $M$,  there exists a line $\sigma$ such that $(\gamma ,\sigma)$ is a weak hyperbolic pair.
\end{prop}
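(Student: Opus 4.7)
The plan is to exploit the visibility axiom at $p$ twice: first to identify ideal directions associated with $\gamma$, and then to force the required divergence for a carefully chosen auxiliary line $\sigma$. Since $M$ is simply connected and has no conjugate points, the exponential map $\exp_p \colon T_pM \to M$ is a diffeomorphism, so every geodesic through $p$ is a line and every ray from $p$ is globally minimizing.

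First, I would identify the visual data. Passing to a subsequence of $t_k \to +\infty$ and applying Arzel\`a--Ascoli to the minimizing segments $[p,\gamma(t_k)]$, one obtains a limit ray $\alpha^+$ from $p$ asymptotic to $\gamma(+\infty)$, with initial direction $v^+ \in S_pM$; visibility at $p$ implies this limit is independent of subsequence. Define $v^-$ analogously from $\gamma(-\infty)$. Assume $p\notin\gamma$ (otherwise shift $p$ slightly, inheriting a visibility statement nearby), and let $M^+$ be the component of $M\setminus\gamma$ containing $p$. Call a direction $v\in S_pM$ \emph{good} if both the forward ray $\exp_p(tv)$, $t\ge 0$, and the backward ray $\exp_p(-tv)$, $t\ge 0$, lie in $M^+$, in which case $\sigma_v := \exp_p(\mathbb{R} v)$ is a line disjoint from $\gamma$. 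The good set is the intersection of two open arcs of $S_pM$ with endpoints in $\{v^+,v^-\}$ and $\{-v^+,-v^-\}$ respectively; replacing $p$ with a basepoint farther from $\gamma$ if necessary (where the bad arc of directions crossing $\gamma$ shrinks), I pick a good $v$ with $v\notin\{\pm v^+,\pm v^-\}$.

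The proof is then completed by a direct visibility estimate. Parametrize $\sigma := \sigma_v$ so that $\sigma(t)$ and $\gamma(t)$ approach the same end of the band they bound as $t\to +\infty$, and suppose for contradiction that $d(\gamma(t),\sigma(t)) \leq D$ for all sufficiently large $t$. The direction at $p$ to $\sigma(t)$ is identically $v$, while the direction at $p$ to $\gamma(t)$ converges to $v^+$, so
\[
  \angle_p\bigl(\gamma(t),\sigma(t)\bigr) \longrightarrow \angle(v^+,v) =: \theta_0 > 0.
\]
Applying the visibility axiom with $\varepsilon = \theta_0/2$ yields $R>0$ such that any geodesic segment subtending angle $\ge \varepsilon$ at $p$ must enter $B(p,R)$. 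Hence the minimizing segment of length $\leq D$ from $\gamma(t)$ to $\sigma(t)$ meets $B(p,R)$, forcing $d(\gamma(t),p) \leq D + R$ and contradicting $d(\gamma(t),p) \to \infty$. Thus $d(\gamma(t),\sigma(t))$ is unbounded on $[T_0,\infty)$, so some sequence $t_k\to +\infty$ realizes $d(\gamma(t_k),\sigma(t_k))\to\infty$; repeating the argument for $t\to -\infty$ gives the second divergence condition and establishes that $(\gamma,\sigma)$ is a weak hyperbolic pair.

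The main obstacle is the middle step: producing $\sigma$ simultaneously disjoint from $\gamma$ and with both ends angularly separated from $v^\pm$ at $p$. In unfavorable configurations (say, $p$ close to $\gamma$, so the good arc is narrow) the good set might fail to contain an antipodal pair, and one must either propagate the visibility statement to a more convenient basepoint, or construct $\sigma$ as a limit of lines lying in the opposite half-plane $M^-$. Verifying the required angular separation at $p$ in this limiting construction, without loss of the visibility ingredient, is where I expect the technical core of the proof to lie.
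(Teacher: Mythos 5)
Your endgame is sound and is essentially the same mechanism the paper uses: the contrapositive of visibility at $p$ (a segment of length $\leq D$ joining $\gamma(t)$ to $\sigma(t)$ that subtends a definite angle at $p$ must meet $B(p,R)$, which is impossible once $d(p,\gamma(t))>R+D$) is exactly what the paper packages as Proposition \ref{prop:boundary}. The genuine gap is the middle step you flag yourself: the existence of the line $\sigma$. You require $\sigma$ to pass through $p$, i.e.\ you need a direction $v\in S_pM$ with both $v$ and $-v$ outside the arc $A$ of directions whose forward ray meets $\gamma$, and additionally $v\notin\{\pm v^{+},\pm v^{-}\}$. The complement $B=S_pM\setminus A$ is a closed arc with endpoints $v^{\pm}$, and your good set is $B\cap(-B)$, which is nonempty only when $B$ has angular width at least $\pi$. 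Nothing in the hypotheses bounds the width of $A$ for a plane that is merely without conjugate points, and already in the borderline Euclidean picture the good set degenerates to exactly $\{v^{+},-v^{+}\}$, which your angular-separation condition excludes; if the visibility point happens to lie on $\gamma$ the through-$p$ construction is impossible outright. Neither of your proposed repairs closes this: visibility is assumed \emph{only at the single point} $p$, and transferring it to a nearby or more distant basepoint is a comparison/convexity statement (a theorem for Hadamard manifolds) that is not available without conjugate points, so ``shift $p$ slightly'' forfeits the only hypothesis you have; and ``construct $\sigma$ as a limit of lines in $M^{-}$'' is not yet an argument.

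The paper sidesteps the difficulty by not asking $\sigma$ to pass through $p$. It fixes two distinct rays $\alpha_{+},\alpha_{-}$ issuing from $p$ into one component of $M\setminus\gamma$ and takes $\sigma$ to be a sublimit of the chords from $\alpha_{+}(t_k)$ to $\alpha_{-}(t_k)$ (Proposition \ref{prop:line}): these chords subtend the fixed angle $\measuredangle(\alpha_{+}'(0),\alpha_{-}'(0))>0$ at $p$, so visibility forces them all to meet $B(p,R)$ and they subconverge to a line contained in the convex sector between $\alpha_{+}$ and $\alpha_{-}$, hence disjoint from $\gamma$; divergence along a sequence then follows because $\alpha_{+}$ is not asymptotic to $\gamma|_{[0,\infty)}$ and separates $\sigma$ from $\gamma$ near that end. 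To salvage your version you would have to prove that $B$ has width greater than $\pi$, which is essentially as hard as the proposition itself; I would replace your construction of $\sigma$ by the chord-limit one.
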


Once we have the existence of (weak) hyperbolic pairs, its combination with Theorem \ref{thm:foliation} will give relatively simple proofs of the conjecture.

\subsection{Some properties of planes without conjugate points}
\label{subsec:ncp_properties}
We will collect here a few properties of Riemannian planes without conjugate points that, although elementary, will be used repeatedly in the paper. Recall that a line is a geodesic $\gamma:\R\to M$ such that it minimizes the distance between any two of its points. 

\begin{lemma}
\label{lem:line properties}
Let $M$ be a complete Riemannian plane without conjugate points. Then
\begin{enumerate}
\item Any geodesic $\gamma:\R\to M$ is a line;
\item any pair of lines $\gamma_1$, $\gamma_2:\R\to M$ are disjoint or intersect at most in a single point;
\item each line $\gamma:\R\to M$ divides $M$ into two closed geodesically convex sets. 
\end{enumerate}
\end{lemma}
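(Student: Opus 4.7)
The plan is to reduce all three properties to the Cartan--Hadamard-type theorem for manifolds without conjugate points: if $M$ is complete, simply connected, and has no conjugate points, then for every $p \in M$ the exponential map $\exp_p : T_pM \to M$ is a diffeomorphism. Our standing assumption $M \approx \R^2$ supplies simple connectedness, so this theorem applies. In particular, any two points $p, q \in M$ are joined by a unique geodesic, and Gauss's lemma (applied globally through the diffeomorphism, as in the classical argument for nonpositively curved manifolds) shows this geodesic minimizes length. Applied with $p = \gamma(s)$ and $q = \gamma(t)$ along an arbitrary geodesic $\gamma : \R \to M$, this gives $d(\gamma(s), \gamma(t)) = |t - s|$, proving (1).

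For (2), suppose two lines $\gamma_1, \gamma_2$ meet at two distinct points $p$ and $q$. Each furnishes a minimizing geodesic segment from $p$ to $q$; the uniqueness established in (1) forces these segments to coincide, and the uniqueness of geodesics with prescribed initial velocity then forces $\gamma_1$ and $\gamma_2$ to agree as parametrized curves (up to a shift of parameter). Hence two lines whose images are distinct as subsets of $M$ meet in at most one point.

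For (3), the arc-length parametrization of a line $\gamma$ is a proper topological embedding of $\R$ into $M \approx \R^2$; the Jordan curve theorem applied in the one-point compactification $S^2$ to $\gamma \cup \{\infty\}$ shows that $M \setminus \gamma$ consists of exactly two open connected components $U_1$ and $U_2$. Given $p, q \in \overline{U_1}$, extend the unique minimizing geodesic from $p$ to $q$ to a full line $\sigma$. If $\sigma$ coincides with $\gamma$ as a set, the segment sits inside $\gamma \subset \overline{U_1}$; otherwise (2) ensures $\sigma$ meets $\gamma$ in at most one point. If the segment from $p$ to $q$ were to enter $U_2$, connectedness of the components of $\sigma \setminus \gamma$ would force it to cross $\gamma$ twice before reaching $q$, contradicting (2); hence the segment remains in $\overline{U_1}$, and the same reasoning applies to $U_2$. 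The only subtle step is this last topological case analysis; the main obstacle, insofar as there is one, is invoking the correct version of the Cartan--Hadamard theorem valid under the weaker hypothesis of no conjugate points, after which (1)--(3) are essentially formal.
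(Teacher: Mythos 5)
Your proposal is correct and follows essentially the same route as the paper, which simply notes that the absence of conjugate points makes $\exp_p$ a global diffeomorphism (via completeness and simple connectedness) and derives (1)--(3) from the resulting uniqueness of geodesics between points; you have merely filled in the standard details, including the Jordan curve argument for (3) that the paper leaves implicit.
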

\begin{proof}
Using standard arguments, one shows that the lack of conjugate points implies that the exponential map is a diffeomorphism at every point. This shows the first and second points. The third follows immediately from the second.  
\end{proof}

\section{Line foliations on a Riemannian plane}

In this section we assume that $M$ is a Riemannian plane, and  with a given foliation $\mathcal F$ by geodesic lines.
For any leaf $\sigma\in \mathcal F$, $\sigma$ separates $M$ into two closed half spaces, that we denote as $H_{\sigma}^{\pm}$. For any two distinct leaves $\sigma, \gamma\in \mathcal F$, there is a half space defined by $\sigma$ containing $\gamma$, say $H_{\sigma}^{-}$. On the other hand there is a half space defined by $\gamma$ containing $\sigma$, say $H_{\gamma}^{+}$. 
\begin{defn}
The region bounded by $\sigma$ and $\gamma$ is called a \emph{strip}, denoted by
$$
S_{\gamma}^{\sigma}=H_{\sigma}^{-}\cap H_{\gamma}^{+}.
$$
\end{defn}

We need the following lemma to show all those strips between two leaves behave well.

\begin{lemma}\label{lem:betweenness}
Let $\sigma_{i}$, $i=1,3$ be two leaves in $\mathcal F$, $p$ be an interior point of the strip $S_{\sigma_{1}}^{\sigma_{3}}$. Let $\sigma_{2}$ be the leaf passing through $p$. Then $\sigma_{2}$ separates $S_{\sigma_{1}}^{\sigma_{3}}$, i.e.
$$S_{\sigma_{1}}^{\sigma_{2}}\cap S_{\sigma_{2}}^{\sigma_{3}}=\sigma_{2},$$ and
$$S_{\sigma_{1}}^{\sigma_{3}}=S_{\sigma_{1}}^{\sigma_{2}}\cup S_{\sigma_{2}}^{\sigma_{3}}.$$
\end{lemma}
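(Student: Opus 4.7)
The plan is to first show that $\sigma_2$ lies entirely inside the strip $S_{\sigma_1}^{\sigma_3}$, and then to show that $\sigma_2$ separates the two boundary lines $\sigma_1$ and $\sigma_3$ to opposite sides. Both identities of the lemma will then follow by intersecting half-spaces. I will use only two ingredients: the topological fact that a properly embedded line in $M\cong\RR^2$ separates $M$ into two sides (which defines $H_{\sigma}^{\pm}$), and the defining property of the foliation $\mathcal{F}$ that its leaves are pairwise disjoint. The no-conjugate-points hypothesis is not needed here.

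For the first step, note that since $\sigma_2$ is a leaf of $\mathcal{F}$ distinct from $\sigma_1$ and $\sigma_3$, we have $\sigma_2\cap\sigma_1=\emptyset$ and $\sigma_2\cap\sigma_3=\emptyset$. Hence $\sigma_2$ is connected and misses $\sigma_1$, so it is contained in exactly one connected component of $M\setminus\sigma_1$; since the interior point $p\in\sigma_2$ lies in the interior of $H_{\sigma_1}^{+}$, we conclude $\sigma_2\subset H_{\sigma_1}^{+}$. The same argument with $\sigma_3$ gives $\sigma_2\subset H_{\sigma_3}^{-}$, so $\sigma_2\subset S_{\sigma_1}^{\sigma_3}$, and it makes sense to speak of the two sub-strips $S_{\sigma_1}^{\sigma_2}$ and $S_{\sigma_2}^{\sigma_3}$.

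For the second step, I need to verify that $\sigma_1$ and $\sigma_3$ lie in opposite half-spaces of $\sigma_2$, which is what makes the two sub-strip labels compatible with each other. Since $\sigma_1$ is disjoint from $\sigma_2$, it lies entirely in one of $H_{\sigma_2}^{\pm}$; similarly for $\sigma_3$. To see that they lie in opposite ones, I will use that the strip $S_{\sigma_1}^{\sigma_3}$ is homeomorphic to $\RR\times[0,1]$, so I can choose a continuous path $\alpha:[0,1]\to S_{\sigma_1}^{\sigma_3}$ from a point of $\sigma_1$ to a point of $\sigma_3$ passing through $p$ and transverse to $\sigma_2$ at $p$. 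Since $\sigma_2$ separates $M$ locally near $p$ and $\alpha$ crosses it transversally only at $p$, the endpoints of $\alpha$ lie in opposite components of $M\setminus\sigma_2$. After possibly relabeling, $\sigma_1\subset H_{\sigma_2}^{-}$ (where the $-$ is the side containing $\sigma_1$, as required by the strip notation for $S_{\sigma_1}^{\sigma_2}$) and $\sigma_3\subset H_{\sigma_2}^{+}$ (the side containing $\sigma_3$, as required for $S_{\sigma_2}^{\sigma_3}$).

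Once both steps are in place, the two identities of the lemma are purely set-theoretic. From $S_{\sigma_1}^{\sigma_2}=H_{\sigma_2}^{-}\cap H_{\sigma_1}^{+}$ and $S_{\sigma_2}^{\sigma_3}=H_{\sigma_2}^{+}\cap H_{\sigma_3}^{-}$, their intersection forces a point to lie in $H_{\sigma_2}^{-}\cap H_{\sigma_2}^{+}=\sigma_2$, and since $\sigma_2\subset S_{\sigma_1}^{\sigma_3}$ by Step 1, this intersection is exactly $\sigma_2$. For the union, any point of $S_{\sigma_1}^{\sigma_3}=H_{\sigma_1}^{+}\cap H_{\sigma_3}^{-}$ must lie in one of $H_{\sigma_2}^{\pm}$, placing it in the corresponding sub-strip. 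The main obstacle is really Step 2, the opposite-side claim; one must be careful not to appeal to any geometric feature of geodesics beyond the topological separation property of a proper line in the plane, and I expect the cleanest way to write it is to record the local-transversality step as an auxiliary remark about proper embedded lines in $\RR\times[0,1]$ missing the boundary.
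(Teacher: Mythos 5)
Your Step 2 contains the entire difficulty of the lemma, and the argument you give for it is circular. You assert that one can ``choose a continuous path $\alpha:[0,1]\to S_{\sigma_1}^{\sigma_3}$ from a point of $\sigma_1$ to a point of $\sigma_3$ passing through $p$ and transverse to $\sigma_2$ at $p$'' and crossing $\sigma_2$ \emph{only} there. But if $\sigma_1$ and $\sigma_3$ lie on the same side of $\sigma_2$ --- which is exactly the situation to be excluded --- then every path joining them meets $\sigma_2$ an even number of times (mod $2$, counted transversally), so no such $\alpha$ exists. The existence of a path meeting $\sigma_2$ transversally in exactly one point is \emph{equivalent} to the conclusion you are trying to prove, so nothing has been established. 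Concretely, the failure mode is a ``U-shaped'' leaf: a properly embedded line contained in the open strip $\R\times[0,1]$ both of whose ends escape through the \emph{same} end of the strip. Such a curve is disjoint from $\sigma_1$ and $\sigma_3$, passes through an interior point, separates $M$ into two components, and yet leaves $\sigma_1$ and $\sigma_3$ in the same component (e.g.\ the image of $s\mapsto(s^2,\tfrac12+\tfrac{s}{2\sqrt{1+s^2}})$ in the flat strip). Since your proof uses only proper embeddedness and pairwise disjointness of leaves --- ingredients that this example satisfies --- it cannot be correct as written; Step 1 and the final set-theoretic manipulations are fine but carry no weight without Step 2.

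The paper's proof is devoted precisely to excluding the U-shaped configuration, and it must invoke the foliation globally, not just the three leaves. Assuming $\sigma_2$ does not separate the strip, one builds a compact hexagonal disk $\Omega$ bounded by arcs of $\sigma_1,\sigma_2,\sigma_3$ and three pairwise disjoint geodesic segments joining them; the foliation restricts to a nowhere-singular line field on $\Omega$ tangent to three of the sides and transverse to the other three. Doubling $\Omega$ along its boundary gives a line field on $S^2$ with six corner singularities each of Hopf index $\tfrac12$, so the indices sum to $3\neq\chi(S^2)=2$, a contradiction. If you want to repair your write-up, you need an argument of this kind (or some other genuine use of the fact that \emph{every} point of the strip lies on a leaf); the transversal-path step cannot be recorded as an ``auxiliary remark about proper embedded lines in $\R\times[0,1]$'' because for such lines the claim is simply false.
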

\begin{proof}
Suppose the contrary holds, i.e. $\sigma_{2}$ does not separate $S_{\sigma_{1}}^{\sigma_{3}}$. Schematically, it looks like \autoref{pic:tripod}. 

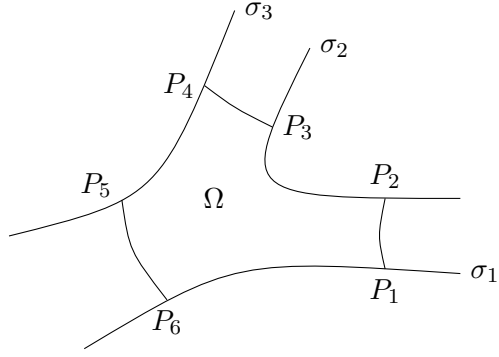
\begin{figure}
\begin{tikzpicture}
\draw(0,1.5)..controls(2,2)..(3,4.5)node[right]{$\sigma_{3}$};
\draw(1,0)..controls(3,1.2)..(6,1)node[right]{$\sigma_{1}$};
\draw(6,2)..controls(3,2)..(4,4)node[right]{$\sigma_{2}$};
\draw (1.5,1.98)node[left, yshift=0.2cm]{$P_{5}$}..controls(1.6,1.3)..(2.1,0.65)node[below]{$P_{6}$};
\draw (5,1.06)node[below]{$P_{1}$}..controls(4.9,1.5)..(5,2)node[above]{$P_{2}$};
\draw (2.6,3.5)node[left]{$P_{4}$}..controls(3,3.2)..(3.5,2.95)node[right]{$P_{3}$};
\draw (3,2)node[left]{$\Omega$};
\end{tikzpicture}
\caption{Construction of the region $\Omega$}\label{pic:tripod}
\end{figure}

Then we can choose six points $P_{i}, i=1,\cdots, 6$ on the leaves as shown in the picture. Here is the detailed construction of choosing such points. For arbitrary chosen $P_{5}\in \sigma_{3}$ and $P_{6}\in \sigma_{1}$, we let $\overline{P_{5}P_{6}}$ be a shortest geodesic connecting them.
Clearly 
$$\overline{P_{5}P_{6}} \cap \sigma_{2}=\varnothing$$
because geodesic segments do not intersect within their interiors.

The $\overline{P_{5}P_{6}}$ separates $S_{\sigma_{1}}^{\sigma_{3}}$ into two parts:
\begin{equation}\label{eq:r1}
S_{\sigma_{1}}^{\sigma_{3}}=S_{\sigma_{1}}^{\sigma_{3}}(-)\cup S_{\sigma_{1}}^{\sigma_{3}}(+).
\end{equation}
Under the assumption that $\sigma_{2}$ does not separate $S_{\sigma_{1}}^{\sigma_{3}}$,  one of the regions in \eqref{eq:r1}, say $S_{\sigma_{1}}^{\sigma_{3}}(+)$, must contain $\sigma_{2}$. We choose any $P_{1}\in \sigma_{1}\cap S_{\sigma_{1}}^{\sigma_{3}}(+)$ which differs from $P_{6}$. We then choose an arbitrary $P_{2}\in \sigma_{2}$. By the same argument, the segment $\overline{P_{1}P_{2}}$ separates the strip $S_{\sigma_{1}}^{\sigma_{2}}$ into two parts, one of which contains $\sigma_{3}$, say $S_{\sigma_{1}}^{\sigma_{3}}(-)$. We then choose arbitrary 
$$P_{3}\in \sigma_{2}\cap S_{\sigma_{1}}^{\sigma_{3}}(-)\cap S_{\sigma_{1}}^{\sigma_{3}}(+),$$
$$P_{4}\in \sigma_{3}\cap S_{\sigma_{1}}^{\sigma_{3}}(-)\cap S_{\sigma_{1}}^{\sigma_{3}}(+),$$
which differs from $P_{2}$ and $P_{5}$.  Since geodesic segments do not intersect more than once in their interior, the geodesic segments  $\overline{P_{1}P_{2}}$, $\overline{P_{3}P_{4}}$ and $\overline{P_{5}P_{6}}$ do not intersect. Then we denote the compact region in $M$, bounded by the broken geodesic loop $\overline{P_{1}P_{2}P_{3}P_{4}P_{5}P_{6}P_{1}}$, by $\Omega$. Clearly $\Omega$ is homeomorphic to the 2-dimensional disk. 
 
By the definition of foliation and non-tangency of geodesic in Riemannian manifolds, through each point on $\overline{P_{1}P_{2}}$, $\overline{P_{3}P_{4}}$ and $\overline{P_{5}P_{6}}$, there is a unique leaf passing through it. Therefore, there is a well defined nowhere singular line field $\xi$ given by the foliation $\mathcal F$ on $\Omega$. Let $\dbl(\Omega)$ be the double of $\Omega$, i.e. the gluing of two copies of $\Omega$ along the common boundary. Clearly $\dbl(\Omega)$ is homeomorphic to $S^{2}$. We can glue the two foliations together along the boundary to get a nowhere singular line field on $\dbl(\Omega)\setminus\{P_{1},P_{2},P_{3},P_{4},P_{5},P_{6}\}$. Denote the line field on $\dbl(\Omega)$ by $\bar\xi$. The local picture of $\bar\xi$ looks like \autoref{pic:hopf} at the conner point $P_{i}$.

\begin{figure}
\begin{tikzpicture}
\draw (0,2.5)--(0,3);
\draw (-2,3)..controls(0,0)..(2,3);
\draw (-1.5,3)..controls(0,.5)..(1.5,3);
\draw (-1,3)..controls(0,1)..(1,3);
\draw (-0.5,3)..controls(0,1.5)..(0.5,3);
\end{tikzpicture}
\caption{Local picture of $\bar\xi$ near $P_{i}$}\label{pic:hopf}

\end{figure}
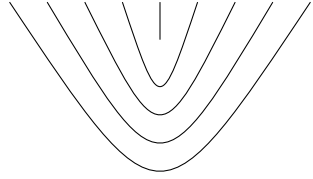

By the Hopf Index Theorem, we have
$$
\sum_{i=1}^{6} \hind_{\bar\xi}(P_{i})=\chi(\dbl(\Omega)),
$$
where $\hind_{\bar\xi}(P_{i})$ is the Hopf index of the line field $\bar\xi$ at the singularity $P_{i}$ and $\chi(\dbl(\Omega))$ is the Euler characteristic number of $\dbl(\Omega)$.  The Euler number is clearly equals to $2$ since the double is homeomorphic to the 2-dimensional sphere. On the other hand by \cite{hopf} page 109, the index of $\bar\xi$ at $P_{i}$ is $\frac12$. Therefore $\sum_{i=1}^{6} \hind_{\bar\xi}(P_{i})=3$, we get a contradiction.
\end{proof}

\begin{proof}[Proof of \autoref{thm:foliation}]
In \cite{HR1957}, it is seen that by passing to their quotient spaces, isomorphism classes of foliations in $\mathbb R^{2}$ with non-compact leaves are in one-to-one correspondence with the set of simply connected (not necessarily Hausdorff) one-dimensional manifolds without boundary with a countable basis. 
Thus, to prove our theorem, we only need to show that the leaf space of our line foliation $\mathcal{F}$ is homeomorphic to the real line $\mathbb R$. We will do this by showing that $M/\mathcal{F}$ is  Hausdorff. 

First, for any two leaves $\sigma_{-1}$ and $\sigma_{1}$, we can pick a geodesic segment (not necessarily unit speed) $\gamma:[-2,2]$ connecting them, i.e. $\gamma(i)\in \sigma_{i}$ for $i=-1, 1$. By Lemma \autoref{lem:betweenness}, for each $t\in (-2,2)$ there is a leaf $\sigma_t$ in $\mathcal{F}$ passing through $\gamma(t)$. Once again, Lemma \ref{lem:line properties} gives that    $\sigma_t$  and $\gamma$ will intersect in a single point.

Therefore the sets $\{\sigma_{t}\,:\, t\in (-2, 0)\}$ and $\{\sigma_{t}\,:\, t\in (0, 2)\}$ serve respectively as separating neighborhoods for the two leaves $\sigma_{-1}$ and $\sigma_{1}$. This proves that the leave space is Hausdorff; by \cite{HR1957}, the leave space is homeomorphic to $\mathbb R$ and our foliation is equivalent to the standard foliation of $\R^2$.
\end{proof}

\section{Total curvature and hyperbolic pairs}
In this section we review basic properties of the ideal boundary of a surface admitting total curvature, and prove  Proposition \ref{prop:total_hyp}.

\subsection{Total curvature of surfaces}
For an open complete surface, let us recall the following definition of total curvature. Let $M$ be a $2$-dimensional complete non-compact surface possibly with nonempty boundary, with a Riemannian metric $g$. Denote by $K: M\to \mathbb R$ the Gaussian curvature of $M$, and let $d\mu$ be its associated Riemannian density. Define $K^+=\max\{K, 0\}$ and $K^-=\min\{K, 0\}$. Thus we can define \emph{the total positive curvature} $c_+(M)$ and \emph{total negative curvature} $c_-(M)$ by
\[
c_+(M):=\int_M K^+d\mu,\
\quad
\ c_-(M):=\int_M K^-d\mu.
\]
We will say that the total curvature $c(M)$ of $M$ exists, or that {\em $M$ admits total curvature}, if at least one of $c_+(M)$ or $c_-(M)$ is finite, and we write in that case
$$
c(M):=c_+(M)+c_-(M).
$$
By theorems of \cite{CV1935} and \cite{Hub1966}, the total curvature of $M$ exists if and only if $c_+(M)$ is finite. 

\subsection{The ideal boundary}
Since the general case is rather involved (cf. \cite{Shi1991, Shi1996, SST2003} for general discussion on the ideal boundary of surfaces admitting total curvature), we will focus our study on the special case when $(S, g)$ is homeomorphic to plane $\mathbb R^{2}$, with a complete Riemannian metric $g$ without conjugate points. 

We fix a base point $O\in S$. The unit tangent sphere at $O$ is isometric to a unit circle; we will use the closed interval $[0, 2\pi]$ to parametrize it counterclockwise, with $\theta=0$ and $\theta=2\pi$ corresponding to some given ray $\gamma$ to be fixed later. For any $\theta\in [0, 2\pi]$, there exists a unique ray $\gamma_{\theta}$ emanating from $O$ along the direction $\theta$. We denote the set of rays starting at $O$ by $\Gamma_{O}$. 

For $0\leq\alpha<\beta< 2\pi$, the plane is divided into to parts by the rays $\gamma_\alpha$ and $\gamma_\beta$:
the closed sector $T_{\alpha, \beta}$ containing every point in the rays $\gamma_\theta$ with $\alpha\leq \theta\leq \beta$,  and its closed complement that we denote as $T_{\beta,\alpha}$.

To each one of these sectors, we associate the numbers
\[
\lambda_{\infty}(T_{\alpha, \beta}):=
\beta-\alpha-c(T_{\alpha, \beta}),
\]
and
\[
\lambda_{\infty}(T_{ \beta,\alpha}):=
2\pi-(\beta-\alpha)-c(T_{\beta, \alpha}),
\]
the rule of thumb being that $\lambda_\infty$ of a sector is its interior angle minus its total curvature.

One can define the pseudo-distance 
$$
d_{\infty}(\gamma_{\alpha}, \gamma_{\beta})=\min\{\lambda_{\infty}(T_{\alpha, \beta}), \lambda_{\infty}(T_{\beta, \alpha})\},
$$ 
between rays in $\Gamma_O$;

then the ideal boundary $S(\infty)$ is defined to be the quotient of the set $\Gamma_{O}$ by the equivalent relation $\sim$ given by $\gamma_{\alpha}\sim \gamma_{\beta}$ if and only if $d_{\infty}(\gamma_{\alpha}, \gamma_{\beta})=0$. Then $d_{\infty}$ can be extended to a metric on $S(\infty)$, which is called the Tits metric. The topology induced by $d_{\infty}$ is called the Tits topology.  

Now, we need the following characterization of $S(\infty)$ by Shioya
\begin{prop}[\cite{Shi1991}]\label{prop:shioya1}
Let $S^{2}$ be a simply connected complete Riemannian surface admits total curvature with one end, then
\begin{enumerate}
\item If $\lambda_{\infty}(S)<\infty$, then $(S(\infty), d_{\infty})$ is isometric to the circle of length $\lambda_{\infty}(S)$,
\item If $\lambda_{\infty}(S)=\infty$, then $(S(\infty), d_{\infty})$ is at most continuum union of closed line segments.
\end{enumerate}
\end{prop}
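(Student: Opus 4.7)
The plan is to exploit the additivity of $\lambda_\infty$ under subdivision of sectors in order to produce an arc-length parametrization of the ideal boundary. Because the interior angle and the curvature integral are both additive under $T_{\alpha,\beta} = T_{\alpha,\gamma} \cup T_{\gamma,\beta}$, one immediately gets $\lambda_\infty(T_{\alpha,\beta}) = \lambda_\infty(T_{\alpha,\gamma}) + \lambda_\infty(T_{\gamma,\beta})$. The crucial preliminary is to establish $\lambda_\infty(T_{\alpha,\beta}) \geq 0$ for every sector. I would obtain this by applying Gauss--Bonnet to the truncated sector $T_{\alpha,\beta} \cap B_R(O)$: its corner angle at $O$ is $\beta - \alpha$, the interior angles where the rays cross $\partial B_R(O)$ are positive, and the geodesic curvature integral over the arc of $\partial B_R(O)$ is asymptotically non-negative (using finiteness of $c_+$ to control the curvature of distance spheres). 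Passing to $R\to\infty$ yields $(\beta-\alpha)-c(T_{\alpha,\beta})\geq 0$.

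Granting this, set $\ell(\theta) := \lambda_\infty(T_{0,\theta})$, a non-decreasing function on $[0,2\pi]$ with $\ell(2\pi) = \lambda_\infty(S)$. For Case 1, where $\lambda_\infty(S) < \infty$, declare $\gamma_\alpha \sim \gamma_\beta$ iff $\ell(\alpha) \equiv \ell(\beta) \pmod{\lambda_\infty(S)}$. By the definition of $d_\infty$ and the additivity of $\ell$, for $\alpha<\beta$ one has
\[
d_\infty(\gamma_\alpha,\gamma_\beta) \;=\; \min\bigl\{\ell(\beta)-\ell(\alpha),\; \lambda_\infty(S)-(\ell(\beta)-\ell(\alpha))\bigr\},
\]
which is precisely the arc-length distance on a circle of circumference $\lambda_\infty(S)$. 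Hence the induced map $\bar\ell\colon S(\infty) \to \R/\lambda_\infty(S)\Z$ is a well-defined isometric bijection, settling the first case.

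For Case 2, $\ell$ takes the value $+\infty$ on a nontrivial part of $[0,2\pi]$. Declare $\alpha\approx\beta$ iff $d_\infty(\gamma_\alpha,\gamma_\beta)<\infty$; by additivity of $\lambda_\infty$ each $\approx$-class is a subinterval of $[0,2\pi]$, and on each such interval $\ell$ restricts (after a finite normalization) to a monotone arc-length parametrization of an isometric closed segment in $S(\infty)$. Distinct classes sit at infinite Tits distance from one another, so $(S(\infty),d_\infty)$ is a disjoint union of closed segments, and the family of such intervals has cardinality at most continuum. The hard part is the non-negativity step: on an unbounded sector with possibly very negative total curvature, controlling the geodesic curvature of distance spheres requires a Cohn--Vossen-type estimate leveraging finiteness of $c_+$; once that quantitative control is secured, everything that follows is essentially book-keeping on the additive function $\ell$.
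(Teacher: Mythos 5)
This proposition is not proved in the paper at all: it is quoted verbatim from Shioya \cite{Shi1991} (see also \cite{SST2003}), so there is no in-paper argument to compare against. Your outline is essentially the standard proof of Shioya's theorem: additivity of $\lambda_\infty$ under subdivision, non-negativity of $\lambda_\infty$ on sectors, and then the monotone ``arc-length'' function $\ell(\theta)=\lambda_\infty(T_{0,\theta})$ doing the book-keeping. In the setting the paper actually works in (no conjugate points, so $\exp_O$ is a diffeomorphism, every direction issues a ray, and every distance circle $\partial B_R(O)$ is a smooth curve met orthogonally by the rays), this scheme can be carried through, and Case 1 and the segment structure in Case 2 are correct modulo two points you gloss over: continuity of $\ell$ (which follows from $c_+<\infty$ plus monotone convergence of $c(T_{0,\theta})$ in $\theta$) and the closedness of each component in Case 2, which requires the lower-semicontinuity statement that the paper isolates separately as Lemma \ref{lem:key} (Lemma 3.5.1 of \cite{SST2003}); without it you cannot rule out a half-open class whose normalized $\ell$ stays bounded at the missing endpoint.

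The genuine weak spot is exactly the step you flag: $\lambda_\infty(T_{\alpha,\beta})\ge 0$. Your proposed justification --- that the geodesic curvature integral over the arc $\partial B_R(O)\cap T_{\alpha,\beta}$ is ``asymptotically non-negative, using finiteness of $c_+$ to control the curvature of distance spheres'' --- is not right as stated: finiteness of $c_+$ controls the total geodesic curvature of the \emph{whole} circle $\partial B_R(O)$ via Gauss--Bonnet, but says nothing directly about the partial arc cut out by the sector. The correct mechanism (in the no-conjugate-points case) is the first variation of arc length: writing $L_T(R)$ for the length of $\partial B_R(O)\cap T_{\alpha,\beta}$, orthogonality of the rays to the distance circles kills the boundary terms and gives $L_T'(R)=\int_{\partial B_R\cap T}\kappa_g\,ds=(\beta-\alpha)-c(T_{\alpha,\beta}\cap B_R)$, which converges to $\lambda_\infty(T_{\alpha,\beta})$; if this limit were negative, $L_T(R)$ would eventually be negative, a contradiction. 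Finally, note that the proposition as stated (and as Shioya proves it) concerns arbitrary surfaces admitting total curvature, where not every direction carries a ray and distance circles are non-smooth; your argument does not cover that generality, which is presumably why the authors simply cite \cite{Shi1991} rather than reprove it.
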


\begin{prop}[\cite{Shi1996}]\label{prop:PropOfLine}
Let $M$ be a Riemannian surface with total curvature. Then for any line $\gamma$, $d_\infty(\gamma(-\infty), \gamma(\infty))\ge \pi.$
Moreover, for any $x, y\in M(\infty)$, with $d_\infty(x, y)> \pi$, there exists a line $\sigma$ with $\sigma(-\infty)=x$ and $\sigma(\infty)=y$. For simplicity, we will say that such line $\sigma$ connects $x$ to $y$.
\end{prop}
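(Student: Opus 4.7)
I would prove the two assertions separately, both hinging on Gauss--Bonnet and the diffeomorphism $\exp_O\colon T_OM\to M$ guaranteed by the no conjugate points hypothesis.

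For (1), fix $O=\gamma(0)$; the line $\gamma$ splits into two opposite rays $\gamma_{\pm}$ at $O$ (making interior angle $\pi$) and splits $M$ into closed half-planes $H^{\pm}$. By the very definition of $\lambda_\infty$,
$$\lambda_\infty(H^{\pm})=\pi-c(H^{\pm}),\qquad d_\infty(\gamma(-\infty),\gamma(\infty))=\min\{\lambda_\infty(H^+),\lambda_\infty(H^-)\},$$
so it suffices to show $c(H^{\pm})\le 0$. I would apply Gauss--Bonnet to the half-disk $\Omega_R^+:=H^+\cap B_R(O)$, which is a topological disk (since $\exp_O$ is a diffeomorphism) whose boundary consists of $\gamma|_{[-R,R]}$ (a geodesic) and the half geodesic circle $c_R^+$, meeting $\gamma$ orthogonally at $\gamma(\pm R)$ by the Gauss lemma. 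This yields
$$\int_{\Omega_R^+}K\,dA+\int_{c_R^+}\kappa_g\,ds+2\cdot\tfrac{\pi}{2}=2\pi,$$
and writing the metric in polar coordinates as $dR^2+J(R,\theta)^2\,d\theta^2$ one has $\int_{c_R^+}\kappa_g\,ds=L_+'(R)$ for $L_+(R):=\mathrm{length}(c_R^+)=\int_0^\pi J(R,\theta)\,d\theta$. Letting $R\to\infty$ gives $c(H^+)=\pi-\lim_R L_+'(R)$. I would then exploit no conjugate points via the Riccati equation $u'+u^2+K=0$ for $u=J'/J$ to show $\liminf_R J'(R,\theta)\ge 1$ pointwise, so Fatou yields $\liminf_R L_+'(R)\ge \pi$ and hence $c(H^+)\le 0$. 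The same argument works for $H^-$, completing (1).

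For (2), given $x,y\in S(\infty)$ with $d_\infty(x,y)>\pi$, pick rays $\gamma_x,\gamma_y$ from $O$ representing $x$ and $y$. For each $n$, let $\sigma_n$ be the unique minimizing geodesic segment from $\gamma_x(n)$ to $\gamma_y(n)$. The crucial step is to show that $\sigma_n\cap B_R(O)\ne\varnothing$ for some $R$ independent of $n$. Suppose not; after passing to a subsequence $d(O,\sigma_n)\to\infty$, and for large $n$ the segment $\sigma_n$ lies entirely in one of the two sectors cut out by $\gamma_x\cup\gamma_y$, say $T$. An asymptotic analysis in polar coordinates shows that a minimizing geodesic far from $O$ sweeps a total angular variation at most $\pi+o(1)$ as $n\to\infty$; but the endpoints $\gamma_x(n),\gamma_y(n)$ converge in the Tits boundary to $x$ and $y$, forcing the angular variation of $\sigma_n$ to accumulate to at least $d_\infty(x,y)>\pi$, a contradiction. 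Thus $\sigma_n$ meets a fixed ball about $O$. Reparametrizing $\sigma_n$ so that $\sigma_n(0)\in B_R(O)$ and applying Arzel\`a--Ascoli, a subsequence converges uniformly on compacta to a complete geodesic $\sigma$, which by Lemma \ref{lem:line properties}(1) is automatically a line; by construction each of its ends lies in the asymptote class of the corresponding ends of $\sigma_n$, so $\sigma(-\infty)=x$ and $\sigma(+\infty)=y$.

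The main obstacles are the two asymptotic estimates: (a) the Jacobi-field bound $\liminf_R J'(R,\theta)\ge 1$ in Part (1), which needs a careful Riccati comparison using only the absence of conjugate points (not any curvature sign), and (b) the upper bound of $\pi+o(1)$ on the angular variation of a minimizing geodesic far from $O$ in Part (2), which must be extracted from the total curvature hypothesis via a quantitative near-flatness statement on the complement of large balls. Once these two estimates are in place, the rest is Gauss--Bonnet and compactness.
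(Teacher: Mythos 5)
The paper does not prove this proposition at all: it is quoted from Shioya \cite{Shi1996}, and the only commentary given is the remark that Shioya's argument applies Gauss--Bonnet to the region bounded by two rays asymptotic to $x$ and $y$ together with a segment joining points on those rays. Your overall architecture (reduce the first claim to $c(H^{\pm})\le 0$ for the two half-planes cut out by $\gamma$; obtain the line in the second claim as a limit of segments $\sigma_n$ from $\gamma_x(n)$ to $\gamma_y(n)$ after showing they meet a fixed ball) is the right skeleton and matches Shioya's. But your key estimate (a) is false: the absence of conjugate points does \emph{not} imply $\liminf_R J'(R,\theta)\ge 1$ pointwise. Take a plane that is flat except for a small bump of positive curvature centered on the ray $\gamma_{\theta_0}$ and a compensating region of negative curvature off that ray; for a small enough perturbation no geodesic acquires conjugate points, yet along $\gamma_{\theta_0}$ one has $J'(\infty,\theta_0)=1-\int_0^\infty K J\,dt<1$. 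What is true is only the \emph{integrated} bound $\liminf_R\int_0^\pi J'(R,\theta)\,d\theta\ge\pi$, i.e.\ $c(H^+)\le 0$, and this cannot come from a Fatou argument over $\theta$: it is forced by the fact that the boundary geodesic $\gamma$ is \emph{minimizing} (if $c(H^+)>0$ then $\lambda_\infty(H^+)<\pi$ and the asymptotic distance formula gives $d(\gamma(-t),\gamma(t))=2t\sin(\lambda_\infty(H^+)/2)+o(t)<2t$, contradicting the line property). Your Riccati argument never uses minimality of $\gamma$, only that it is a geodesic, so it cannot close; indeed, were the pointwise bound true it would yield a one-line proof of Green's theorem $c(M)\le 0$, which is known to require an integrated argument.

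In part (2) the skeleton is again right, but the mechanism you invoke for the key step (b) --- ``quantitative near-flatness on the complement of large balls'' --- is not available: finite total curvature gives no pointwise decay of $K$. The correct and much simpler device is the one the paper's remark points to: if $d(O,\sigma_n)\to\infty$, apply Gauss--Bonnet to the geodesic triangle $D_n$ bounded by $\gamma_x|_{[0,n]}$, $\sigma_n$, $\gamma_y|_{[0,n]}$, with interior angles $\theta_O$, $\alpha_n$, $\beta_n$, to get $\theta_O-c(D_n)=\pi-\alpha_n-\beta_n\le\pi$; since $D_n$ exhausts the sector $T$ this forces $\lambda_\infty(T)\le\pi$, contradicting $d_\infty(x,y)>\pi$. (You would also still need to check that the limit line $\sigma$ actually satisfies $d_\infty(\sigma(\pm\infty),\{x,y\})=0$ in the Tits sense, which again is a small Gauss--Bonnet sector argument rather than an asymptote-class statement.) Finally, note that the proposition as stated assumes only that $M$ admits total curvature, whereas your proof leans on $\exp_O$ being a diffeomorphism; that restriction is harmless for the paper's applications but means you are proving a weaker statement than the one cited.
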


\begin{remark}
Shioya's proof of Proposition     \ref{prop:PropOfLine} uses Gauss-Bonnet in the region bounded by two rays asymptotic to $x$ and $y$ and a segment connecting two points on these two rays. Therefore, it will also work in surfaces with boundary. For our application, we will apply \ref{prop:PropOfLine} to a half plane without conjugate points.
\end{remark}

The following result is already known. As mentioned in the introduction, it was proven by Green for compact Riemannian manifolds \cite{Gre1958}; Guimaraes extended it later to  the complete case \cite{Gui1992}.
We include here a new proof for surfaces  that takes advantage of the structure of the ideal boundary. 
\begin{prop}\label{prop:Hopf}
Let $M^{2}$ be an open complete Riemannian surface without conjugate points. Suppose the total curvature of $M$ exists. Then 
$c(M):=\int_{M}Kd\mu\le 0$, and  equality holds if and only if the Riemannian universal cover of $M$ is isometric to the flat $\mathbb R^{2}$.
\end{prop}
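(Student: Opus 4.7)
My plan is to exploit Shioya's description of the ideal boundary of a Riemannian plane admitting total curvature (Propositions \ref{prop:shioya1} and \ref{prop:PropOfLine}) together with the fact that in a surface without conjugate points every complete geodesic is a line (Lemma \ref{lem:line properties}).

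\textbf{The inequality.} I would first reduce to the simply connected case. If $M$ is not simply connected then, having finite topological type (Cohn--Vossen/Huber) and being an open aspherical $2$-manifold distinct from $\R^2$, it satisfies $\chi(M)\le 0$; the Cohn--Vossen inequality $c(M)\le 2\pi\chi(M)\le 0$ already gives the bound. So assume $M\cong\R^2$. If $c(M)=-\infty$ there is nothing to prove; otherwise $\lambda_\infty(M)=2\pi-c(M)$ is finite, and by Proposition \ref{prop:shioya1}(1) the ideal boundary $M(\infty)$ is isometric to a metric circle of circumference $\lambda_\infty(M)$, whose Tits diameter is $\lambda_\infty(M)/2$. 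Fix $O\in M$ and pick any direction $\theta$; the complete geodesic through $O$ in direction $\theta$ is a line, so by Proposition \ref{prop:PropOfLine} its two endpoints in $M(\infty)$ are at Tits distance at least $\pi$. Hence the diameter of the ideal circle is $\ge \pi$, i.e., $\lambda_\infty(M)\ge 2\pi$, i.e., $c(M)\le 0$.

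\textbf{The equality case.} Suppose $c(M)=0$ with $M\cong\R^2$. Then $M(\infty)$ is a circle of circumference exactly $2\pi$ and diameter exactly $\pi$, so every line through $O$ has antipodal endpoints. The formula
\[
d_\infty(\gamma_\alpha(\infty),\gamma_{\alpha+\pi}(\infty))=\min\{\pi-c(T_{\alpha,\alpha+\pi}),\,\pi-c(T_{\alpha+\pi,\alpha})\},
\]
combined with $c(T_{\alpha,\alpha+\pi})+c(T_{\alpha+\pi,\alpha})=c(M)=0$ and $d_\infty=\pi$, forces $c(T_{\alpha,\alpha+\pi})=0$ for every $\alpha$. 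Since $O$ was arbitrary, every half-plane of $M$ bounded by a geodesic line has zero total curvature. An inclusion--exclusion argument on the seven regions cut out by three lines in general position then expresses the total curvature of any geodesic triangle as a signed sum of half-plane curvatures and of $c(M)$, all of which vanish; shrinking the triangle to a point yields $K\equiv 0$, so $M$ is flat and, being simply connected and complete, is isometric to $\R^2$. For a non-simply connected $M$ with $c(M)=0$, strict Cohn--Vossen forces $\chi(M)=0$, so $M$ is topologically a cylinder or open M\"obius band; combining equality in Cohn--Vossen with the absence of conjugate points (a Hopf-type rigidity in this topological class) forces $K\equiv 0$ on $M$ and hence $\tilde M$ flat.

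\textbf{Main obstacle.} The delicate step is the equality case. Turning the global identity ``every half-plane bounded by a line has zero total curvature'' into the pointwise statement $K\equiv 0$ is the heart of the rigidity; the inclusion--exclusion (or equivalently the Gauss--Bonnet on shrinking triangles) argument requires genuine use of the freedom to vary the base point $O$, and some care is needed to verify that the Boolean combinations of half-planes really exhaust arbitrarily small regions. The non-simply connected equality case lies outside the ideal-boundary machinery developed in this section and must appeal to a separate rigidity input.
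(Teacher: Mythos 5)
Your proof of the inequality $c(M)\le 0$ is essentially the paper's: both arguments use Proposition \ref{prop:shioya1} to identify $M(\infty)$ with a circle of circumference $2\pi-c(M)$ and Proposition \ref{prop:PropOfLine} to force its diameter to be at least $\pi$ via any complete geodesic (which is a line by Lemma \ref{lem:line properties}). That part is fine.

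The equality case, however, contains a genuine gap. Your preliminary observation is correct: $c(M)=0$ forces both half-planes of every geodesic line to have zero total curvature, since each has interior angle $\pi$, their $\lambda_\infty$'s sum to $2\pi-c(M)=2\pi$, and each is at least $\pi$. But the inclusion--exclusion step does not deliver $K\equiv 0$. For three lines in general position bounding a triangle $T=H_1\cap H_2\cap H_3$, inclusion--exclusion gives
\[
c(M)=\sum_i c(H_i)-\sum_{i<j}c(H_i\cap H_j)+c(T),
\qquad\text{i.e.}\qquad
c(T)=\sum_{i<j}c(H_i\cap H_j),
\]
and the cross terms $c(H_i\cap H_j)$ are total curvatures of \emph{wedges}, not half-planes. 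Knowing that every half-plane has zero total curvature only constrains the four quadrants $Q_1,\dots,Q_4$ at a vertex by $c(Q_1)+c(Q_2)=c(Q_2)+c(Q_3)=\dots=0$, which leaves $c(Q_1)=-c(Q_2)=c(Q_3)=-c(Q_4)$ completely undetermined; equivalently, $\lambda_\infty$ of a sector of angle $\alpha$ can a priori be anything in $[0,\pi]$ even when every half-plane has $\lambda_\infty=\pi$. Plugging Gauss--Bonnet for $T$ into the identity above just returns the tautology $\sum_{i<j}\lambda_\infty(H_i\cap H_j)=\pi$, so no contradiction with $c(T)\ne 0$ arises and the triangle-shrinking step never gets off the ground. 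This is exactly why the equality case is hard: the paper does not attempt an elementary argument but instead converts $\lambda_\infty(S)=2\pi$ into Euclidean area growth $\lim_{r\to\infty}2A(r)/r^2=2\pi$ via Theorem 5.2.1 of \cite{SST2003}, and then invokes the rigidity theorem of \cite{BE2013} for planes without conjugate points with Euclidean area growth. Some such external input appears unavoidable; your sketch replaces it with an argument that does not close. The same criticism applies to your non-simply-connected equality case, where the appeal to ``a Hopf-type rigidity in this topological class'' is precisely the statement being proved rather than an available lemma.
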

\begin{proof}
Let's consider the Riemannian universal cover of $M$, denoted it by $S$. We first note that by Huber's result $c(S)\le 2\pi=2\pi\chi(S)$. Therefore if $c(S)>0$, then by \ref{prop:shioya1} $S(\infty)$ is isometric to a circle of length
$$
\lambda_{\infty}(S)=2\pi\chi(S)-c(S)<2\pi.
$$
It follows that the diameter of $S(\infty)$ is $<\pi$. This contracts to the fact that any line $\gamma$ in $S$ must have $d_{\infty}(\gamma(+\infty), \gamma(-\infty))\ge\pi$. Therefore $c(S)\le 0$. Suppose that if $c(S)=0$, then the ideal boundary of $S$ is isometric to the circle with length $2\pi$. It follows from Theorem 5.2.1 in \cite{SST2003} that 
$$
\lim_{r\to\infty} \frac{2A(r)}{r^{2}}=\lambda_{\infty}(S)=2\pi,
$$
where $A(r)$ is the length of the distance ball $B(O, r)=\{x\in S| |x,O|\le r\}$ for a fixed base point $O\in S$. Applying the rigidity theorem of \cite{BE2013}, we know $S$ is isometric to the flat $\mathbb R^{2}$.
\end{proof}

\subsection{Ray convergence}
The following property is essentially proved in \cite{SST2003}, where all rays are assumed emanating perpendicularly from a core of $M$. For completeness we include a simple proof in our special case. If the surface is a half plane, then there is only one sector between two rays, we called it $T_{\alpha, \beta}$ without referring to any orientation. The distance between $\gamma_{\alpha}$ and $\gamma_{\beta}$ is defined by:
$$
d_{\infty}(\gamma_{\alpha}, \gamma_{\beta})=\lambda_{\infty}(T_{\alpha, \beta}).
$$
Therefore the ideal boundary of the half plane $M$ can be defined in a similar way as the surface $S$. 

\begin{lemma}[Lemma 3.5.1 in \cite{SST2003}]
\label{lem:key}
Let $M$ be a Riemannian surface without conjugate points, homeomorphic to the upper half plane with totally geodesic boundary. Fix a base point $O\in\partial M$. Let $\{\gamma_{\theta_{i}}\}$ be a sequence of rays issued from $O$ which converges to a ray $\gamma_{\theta_{\infty}}$ with $\theta_{i}$ monotonic decreased to $\theta_{\infty}$. Suppose that 
$$
\sup_{i}d_{\infty}(\gamma_{\theta_{1}}, \gamma_{\theta_{i}})<+\infty.
$$ 
Then we have 
\begin{equation*}
d_\infty(\gamma_{\theta_{1}},\gamma_{\theta_{\infty}})<\infty
\end{equation*}
\end{lemma}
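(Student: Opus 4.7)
The plan is to reduce the conclusion to a monotone convergence argument for the positive and negative parts of the Gauss curvature measure applied to the nested family of sectors $T_{\theta_i, \theta_1}$.

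The absence of conjugate points combined with simple connectivity implies that $\exp_O$ is a diffeomorphism, so the rays $\gamma_\theta$ vary continuously in $\theta$, and each sector $T_{\theta_i, \theta_1}$ is the diffeomorphic image of the angular wedge $\{(\theta, r) : \theta_i \leq \theta \leq \theta_1,\ r \geq 0\}$ under the exponential map. Since $\theta_i \searrow \theta_\infty$, the sectors form an increasing family
$$T_{\theta_i, \theta_1} \subseteq T_{\theta_{i+1}, \theta_1} \subseteq T_{\theta_\infty, \theta_1},$$
whose union coincides with $T_{\theta_\infty, \theta_1}$ up to the single ray $\gamma_{\theta_\infty}$, which is a set of measure zero.

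Next, I would unpack the hypothesis using the defining formula
$$d_\infty(\gamma_{\theta_1}, \gamma_{\theta_i}) = (\theta_1 - \theta_i) - c(T_{\theta_i, \theta_1}).$$
Since $0 \leq \theta_1 - \theta_i \leq \theta_1 - \theta_\infty$ is bounded, the uniform bound on $d_\infty(\gamma_{\theta_1}, \gamma_{\theta_i})$ translates into a uniform lower bound $c(T_{\theta_i, \theta_1}) \geq -C'$ for some constant $C' > 0$. Because $M$ admits total curvature we have $c_+(M) < \infty$, hence $c_+(T_{\theta_i, \theta_1}) \leq c_+(M)$ is uniformly bounded, so
$$c_-(T_{\theta_i, \theta_1}) = c(T_{\theta_i, \theta_1}) - c_+(T_{\theta_i, \theta_1}) \geq -C' - c_+(M)$$
uniformly in $i$.

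Finally, monotone convergence applied to the non-negative measures $K^+\, d\mu$ and $-K^-\, d\mu$ on the increasing family of sectors yields
$$c_\pm(T_{\theta_\infty, \theta_1}) = \lim_{i \to \infty} c_\pm(T_{\theta_i, \theta_1}),$$
with $c_+(T_{\theta_\infty, \theta_1}) \leq c_+(M) < \infty$ and, by the previous step, $c_-(T_{\theta_\infty, \theta_1}) \geq -C' - c_+(M) > -\infty$. Therefore $c(T_{\theta_\infty, \theta_1})$ is finite, and
$$d_\infty(\gamma_{\theta_1}, \gamma_{\theta_\infty}) = (\theta_1 - \theta_\infty) - c(T_{\theta_\infty, \theta_1}) < \infty.$$
The only mildly delicate point is the identification $\bigcup_i T_{\theta_i, \theta_1} = T_{\theta_\infty, \theta_1}$ up to measure zero; I expect this to rest solely on the continuity of the ray family $\{\gamma_\theta\}$ guaranteed by the no-conjugate-points hypothesis, so no substantial obstacle arises beyond careful bookkeeping of the curvature measure.
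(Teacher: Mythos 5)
Your proposal is correct and follows essentially the same route as the paper: both arguments rest on the identity $d_\infty(\gamma_{\theta_1},\gamma_{\theta_i})=(\theta_1-\theta_i)-c(T_{\theta_i,\theta_1})$, use $c_+(M)<\infty$ to control the positive part of the curvature and the uniform bound on $d_\infty$ to control the negative part, and then pass to the limit over the increasing family of sectors. The only difference is organizational: the paper splits each sector along a metric ball $B(r)$ and takes a double limit ($i\to\infty$, then $r\to\infty$), whereas you apply monotone convergence to $K^+\,d\mu$ and $-K^-\,d\mu$ directly, which is a slightly cleaner way of doing the same bookkeeping.
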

\begin{proof}
For simplicity, we write $\gamma_{\theta_{i}}$ as $\gamma_{i}$ and $\gamma_{\theta_{\infty}}$ as $\gamma_{\infty}$. The region bounded by $\gamma_{1}$ and $\gamma_{i}$ in the upper half plane is denoted by $T_{1, i}$ since $\theta_{i}$ is decreasing. Note that in this case, there is only one sector between two rays, we called it $T_{1, i}$ without referring to any orientation. The distance is defined by:
$$
d_{\infty}(\gamma_{1}, \gamma_{i})=\lambda_{\infty}(T_{1, i})
$$
for all $i\in \mathbb{N}$. By the monotonicity of $\{\theta_{i}\}$, we know $T_{1,\infty}$ contains every $\gamma_i$. Denote the open metric ball centered at $O$ with radius $r$ by $B(r)$, and the angle between $\gamma_i$ and $\gamma_{j}$ by $\theta_{i, j}=\theta_{i}-\theta_{j}$ for $i<j$, depicted in Figure \ref{fig:ray}.
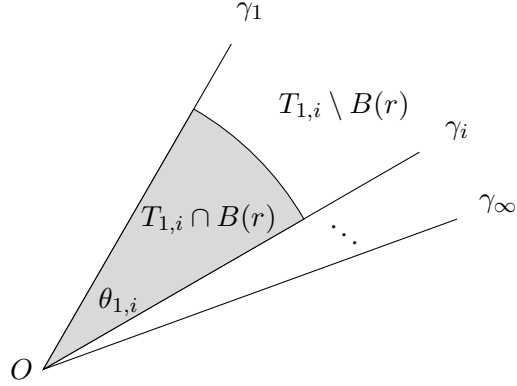
\begin{figure}\label{fig:ray}
\begin{tikzpicture}
   \draw (0,0) node[left]{$O$}--node[pos=1.1]{$\gamma_1$}(2.5,4.33);
   \draw (0,0)--node[pos=1.1]{$\gamma_i$}(5,2.89);
   \draw[fill=gray!30] (0,0)--+(30:4cm) arc (30:60:4cm) -- cycle;
   \draw node at(2.2,2){$T_{1,i}\cap B(r)$};
   \draw node at(4,3.5){$T_{1,i}\setminus B(r)$} ;
   \draw node at(1,0.9){$\theta_{1,i}$};
   \draw node at(4,1.9){$\ddots$};
   \draw (0,0)--node[pos=1.1]{$\gamma_\infty$}(5.5,2); 
\end{tikzpicture}
\caption{Convergent rays in $M(\infty)$}
\end{figure}

By the definition of $d_{\infty}$, for each $i$ and all $r>0$, we have
\begin{equation}\label{eq:key0}
d_\infty(\gamma_1,\gamma_i)=\theta_{1,i}-c(T_{1,i}\cap B(r))-c(T_{1,i}\setminus B(r)).
\end{equation} 
Since
\begin{equation*}
c(T_{1,i}\setminus B(r))\leq \int_{T_{1,i}\setminus B(r)}K^{+}d\mu \leq \int_{T_{1,\infty}\setminus B(r)}K^{+}d\mu.
\end{equation*}
Plugging it into \eqref{eq:key0}, we have
\begin{equation}\label{eq:key1}
d_\infty(\gamma_1,\gamma_i)\geq \theta_{1,i}-c(T_{1,i}\cap B(r))-\int_{T_{1,\infty}\setminus B(r)}K^{+}d\mu.
\end{equation}
Since $B(r)$ is compact, we have
\begin{equation*}
\lim_{i\rightarrow\infty}c(T_{1,i}\cap B(r))=c(T_{1,\infty}\cap B(r)),
\end{equation*}
and clearly
\begin{equation*}
\lim_{i\rightarrow\infty}\theta_{1,i}=\theta_{1,\infty}.
\end{equation*}
Taking lower limit as $i\to \infty$ of the \eqref{eq:key1}, we get
\begin{equation}\label{eq:key2}
\liminf_{i\rightarrow\infty}d_\infty(\gamma_1,\gamma_i)\geq \theta_{1,\infty}-c(T_{1,\infty}\cap B(r)) -\int_{T_{1,\infty}\setminus B(r)}K^{+}d\mu
\end{equation}   
By assumption $S$ admits total curvature. It follows that $\int_{T_{1,\infty}} K^{+} d\mu<\infty$. Therefore
$$
\lim_{r\to \infty}\int_{T_{1,\infty}\setminus B(r)}K^{+}d\mu=0.
$$
Letting $r\to\infty$ on the right hand side of \eqref{eq:key2}, we have
\begin{equation}\label{eq:key3}
\liminf_{i\rightarrow\infty}d_\infty(\gamma_1,\gamma_i)\geq\theta_{1,\infty}-c(T_{1,\infty})=\lambda_{\infty}(T_{1, \infty}).
\end{equation}
The left hand side of \eqref{eq:key3} is finite by the assumption, therefore 
$$
d_\infty(\gamma_1,\gamma_\infty)\le \lambda_{\infty}(T_{1, \infty})<\infty.
$$
This finishes the proof.
\end{proof}

\subsection{Existence of hyperbolic pairs}
\begin{proof}[Proof of Proposition \autoref{prop:total_hyp}]
Now we assume $c(S)<0$, and we will provide a hyperbolic pair for a given $\gamma$.
 
The first case is $c(S)>-\infty$, then $S(\infty)$ is isometric to a circle of length $\ell>2\pi$ by Proposition \autoref{prop:shioya1}. In particular any line $\gamma$ defines two ideal boundary points $\gamma(-\infty)$ and $\gamma(+\infty)$, which are at least $\pi$ apart. Since $\ell >2\pi$, there exists a connected component of $S\setminus \gamma(\mathbb R)$, denoted by $M$ such that $M(\infty)$ is an interval of length larger than $\pi$. Clearly one can choose $x, y\in M(\infty)$ different from $\gamma(-\infty)$ and $\gamma(+\infty)$ and with $d_{\infty}(x, y) >\pi$. Connecting them yields a line $\sigma$. Since our surface $M$ has no conjugate points, $\sigma$ does not intersects $\gamma$. Moreover, 
$$
\infty=\lim_{t\to\infty}d(\gamma_{0}(t), \gamma_{x}(t))\le\lim_{t\to\infty}d(\gamma(t), \sigma(t)),$$ 
and 
$$\infty=\lim_{t\to\infty}d(\gamma_{\pi}(t), \gamma_{y}(t))\le\lim_{t\to-\infty}d(\gamma(t), \sigma(t)),$$
imply that $(\gamma, \sigma)$ is a hyperbolic pair. 

\begin{figure}\label{fig:hyp}
\begin{tikzpicture}
   \draw (0,0)--(8,0)node[right]{$\gamma$};
   \draw (0,2.1)..controls(4, 0.5)..(8, 2.1)node[above]{$\sigma$};
   \draw (4,0)--(0,2)node[below]{$\gamma_x$};
    \draw (4,0)--(8,2)node[below]{$\gamma_y$};
\end{tikzpicture}
\caption{Hyperbolic Pair}\label{HP1}
\end{figure}
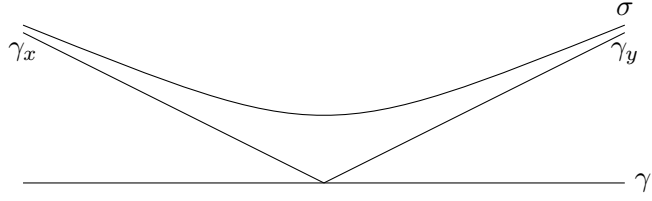

The second case is $c(M)=-\infty$, we have $\lambda_{\infty}(S)=\infty$. Let $\gamma:\mathbb R\to S$ be the given line. We will construct another line $\sigma$ so that the pair $(\gamma, \sigma)$ is a hyperbolic pair. Clearly $\gamma$ divides $S$ into two parts, let consider one of them called $M$ such that $\lambda_{\infty}(M)=\infty$. Now we consider $M$ itself as a Riemannian surface without conjugate points, i.e. we do not view $M$ as a subset of $S$. We can define its ideal boundary $M(\infty)$ as we recalled the definition right before the Lemma  \ref{lem:key}. As before, we fix a base point of  $\gamma(0)\in \partial M=\gamma(\mathbb R)$. Let $\gamma_{0}(t)=\gamma(-t)$ and $\gamma_{\pi}(t)=\gamma(t)$ under the canonical identification of the unit tangent sphere at $O$ and $[0, \pi]$. Consider the set
$$
I_{0}=\{\,\theta\in [0, \pi]\mid d_{\infty}(\gamma_{0}, \gamma_{\theta})<\infty\, \}.
$$
It is easy to see that $I_0$ is an interval: if $\theta\in I_0$, then for any $\theta'$ with $0\leq \theta'\leq \theta$, the sector $T_{0,\theta}$ contains the sector $T_{0,\theta'}$, and thus from $c(T_{0,\theta})>-\infty$, we get $c(T_{0,\theta'})>-\infty$.
 
Clearly $0\in I_{0}$, and $\pi\not\in I_{0}$. There are two sub-cases; the first is when $I_{0}$ is a relatively open interval in $[0, \pi]$. i.e. 
$$
I_{0}=[0, a), \quad a<\pi.
$$
This means 
$$\lim_{\theta\to a^{-}}d_{\infty}(\gamma_{0}, \gamma_{\theta})= \infty$$
That is there exists $[x, y]\subset (0, a)$ such that 
$$d_{\infty}(\gamma_{0}, \gamma_{x})>0, \quad {\rm and}\quad d_{\infty}(\gamma_{x}, \gamma_{y})>\pi$$ 
Then by Proposition \autoref{prop:PropOfLine}, we can connect $\gamma_{x}$ to $\gamma_{y}$ to get a line $\sigma$. Same argument as above shows $(\gamma, \sigma)$ is a hyperbolic pair.
Similarly, we can define 
$$
I_{\pi}=\{\,\theta\in [0, \pi] \mid d_{\infty}(\gamma_{\theta}, \gamma_{\pi})<\infty \,\}.
$$
The very same argument shows that if $I_{\pi}=(b, \pi]$ then we can find a hyperbolic pair.  

So we only need to consider the second sub-case, i.e.  $I_{0}=[0, a]$ and $I_{\pi}=[b, \pi]$ with $a<b$. By the construction of $I_{0}$ and $I_{\pi}$ we know 
$$
d_{\infty}(\gamma_{0}, \gamma_{c})=\infty=d_{\infty}(\gamma_{\pi}, \gamma_{c}),
$$
where $c=\frac{a+b}{2}$. We consider
$$
I_{c}=\{\theta\in [0, \pi]\mid d_{\infty}(\gamma_{\theta}, \gamma_{c})<\infty\}.
$$
Clearly by Lemma \ref{lem:key}, $I_{c}$ is a sub interval of $(a, b)$, with end points $a'\le c\le b'$. If one of the end point of $I_{c}$ is open, then the same construction as in the sub-case $I_{0}=[0, a)$ will give two points in the ideal boundary of distance at least $1.5\pi$ apart in $M(\infty)$, therefore connecting them will give us a desired $\sigma$. If both end points are closed. i.e.
$$
I_{c}=[a', b'],
$$
with $a<a'$ and $b'<b$. Then we let $x=\frac{a+a'}{2}$ and $y=\frac{b'+b}{2}$. Then
$$
d_{\infty}(\gamma_{0}, \gamma_{x})=d_{\infty}(\gamma_{x}, \gamma_{y})=d_{\infty}(\gamma_{y}, \gamma_{\pi})=\infty.
$$
Connecting $x$ to $y$ using \autoref{prop:PropOfLine} yields the desired $\sigma$.
\end{proof}

\section{Visibility planes and weak hyperbolic pairs}
\begin{definition}
A Riemannian manifold $M$ without conjugate points satisfies the \emph{visibility axiom at $p$} if for each $\epsilon>0$ there exists a constant $R=R(\varepsilon,p)>0$ such that for any geodesic segment $\sigma:[a,b]\to M$ satisfying $d(p,\sigma)>R$, the angle $\measuredangle_p(\sigma(a),\sigma(b))\leq \varepsilon$. 
\end{definition}

If $p$ is arbitrary, such a manifold is called a \emph{visibility manifold}; the reader can consult \cite{Eber1972} or \cite{EberOnei1973} for several of their properties.

\begin{defn}
Two geodesic rays $\gamma$, $\sigma:[0,\infty)\to M$ are \emph{asymptotes} if there exists a constant $C>0$ such that $d(\gamma(t),\sigma(t))\leq C$ for every $t>0$.
\end{defn}

We will write 
$\gamma\sim \sigma$ to indicate that two rays are asymptotes.
Due to the triangle inequality, $\sim$ is an equivalence relation on the set of rays in $M$.

\begin{defn}
Let $\Gamma_{p}$ be the set of all geodesic rays in $M$ emanating from $p$. We will use  $M_a(\infty)$ to denote the set of equivalence classes $\Gamma_{p}/\sim$. If $\gamma\in \Gamma_{p}$, we will denote by $\gamma(\infty)$ its equivalence class in $M_a(\infty)$. 
\end{defn}

\begin{prop}\label{prop:boundary}
Let $M$ be a open manifold satisfying the visibility axiom at some point $p$. Then for any two different rays $\gamma$ and $\sigma$ emanating from $p$, we have that $\gamma(\infty)\ne \sigma(\infty)$.
\end{prop}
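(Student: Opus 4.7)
The plan is to argue by contradiction using the visibility axiom to force the angle at $p$ between the two rays to be arbitrarily small, while simple-connectivity plus the absence of conjugate points pins it down to a fixed positive value.

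First I would suppose that $\gamma(\infty)=\sigma(\infty)$, so that by definition there is a constant $C>0$ with $d(\gamma(t),\sigma(t))\leq C$ for every $t\geq 0$. For each $t>0$ I pick a minimizing geodesic segment $\tau_{t}:[0,L_{t}]\to M$ from $\gamma(t)$ to $\sigma(t)$, which has length $L_{t}\leq C$. The key geometric observation is that, by the triangle inequality, every point $q\in\tau_{t}$ satisfies
\[
d(p,q)\;\geq\; d(p,\gamma(t))-d(\gamma(t),q)\;\geq\; t-C.
\]
Hence $d(p,\tau_{t})\to\infty$ as $t\to\infty$.

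Next I would invoke the visibility axiom at $p$: given $\varepsilon>0$, choose $R=R(\varepsilon,p)$; then for every $t>R+C$ the segment $\tau_{t}$ lies outside $B(p,R)$, so
\[
\measuredangle_{p}(\gamma(t),\sigma(t))\;\leq\;\varepsilon.
\]
On the other hand, because $M$ has no conjugate points, the exponential map at $p$ is a local diffeomorphism, and in the plane setting of the paper (cf.\ Lemma \ref{lem:line properties}) it is a global diffeomorphism; in either case the rays $\gamma$ and $\sigma$ themselves are, up to parametrization, the unique geodesics from $p$ to $\gamma(t)$ and $\sigma(t)$. Consequently the angle $\measuredangle_{p}(\gamma(t),\sigma(t))$ equals the fixed angle $\alpha:=\measuredangle(\dot\gamma(0),\dot\sigma(0))$, which is strictly positive since $\gamma\neq\sigma$.

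Choosing $\varepsilon<\alpha$ produces the desired contradiction, finishing the proof. The only delicate point is the identification $\measuredangle_{p}(\gamma(t),\sigma(t))=\alpha$: this needs the uniqueness of the geodesic from $p$ to each of $\gamma(t)$ and $\sigma(t)$, which is where the no-conjugate-points hypothesis (built into the definition of visibility used in this paper) enters. Everything else is a straightforward application of the triangle inequality and the definitions.
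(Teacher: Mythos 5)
Your proof is correct and follows essentially the same route as the paper's: assume the rays are asymptotes, observe that the connecting segments drift to infinity so visibility forces the angle at $p$ to vanish, and contradict the fact that this angle is the fixed positive angle between the initial velocities. You merely make explicit two points the paper leaves implicit (the triangle-inequality estimate $d(p,\tau_t)\ge t-C$ and the uniqueness of geodesics from $p$ needed to identify $\measuredangle_p(\gamma(t),\sigma(t))$ with $\measuredangle(\dot\gamma(0),\dot\sigma(0))$), which is a welcome clarification rather than a different argument.
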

\begin{proof}
Suppose the contrary that there exists some $c>0$ such that $d(\gamma, \sigma)<c$. Then $\measuredangle_{p}(\gamma(t), \sigma(t))\to 0$ since the segments connecting $\gamma(t)$ and $\sigma(t)$ clearly drifts to infinity as $t\to\infty$. On the other hand $\measuredangle_{p}(\gamma(t), \sigma(t))=\measuredangle(\gamma'(0), \sigma'(0))$, a contradiction.
\end{proof}

\begin{prop}\label{prop:line}
Let $x, y\in M_a(\infty)$ be two different points, and denote by $\gamma_x$ and $\gamma_y$ respectively the rays emanating from $p$ with $\gamma_x(\infty)=x$ and $\gamma_y(\infty)=y$. Then for any sequence $t_k\to\infty$, the sequence of the segments $\alpha_{k}$ connecting $\gamma_{x}(t_k)$ and $\gamma_{y}(t_k)$  has a subsequence that converges to a line in $M$.
\end{prop}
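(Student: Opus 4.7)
The plan is to use the visibility axiom to trap the segments $\alpha_{k}$ inside a bounded neighborhood of $p$, then extract a subsequential limit by compactness of the unit tangent bundle, and finally observe that the limit must be a line because arbitrarily long subsegments are minimizing.

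First I would pin down the angle at $p$. Since $x\neq y$, the rays $\gamma_{x},\gamma_{y}$ are distinct, hence have distinct initial vectors, so
\[
\theta_{0}:=\measuredangle(\gamma_{x}'(0),\gamma_{y}'(0))>0.
\]
Because $M$ has no conjugate points, the unique geodesic from $p$ to $\gamma_{x}(t_{k})$ is $\gamma_{x}|_{[0,t_{k}]}$, and similarly for $\gamma_{y}(t_{k})$; hence the angle subtended at $p$ by the endpoints of the segment $\alpha_{k}$ is exactly $\theta_{0}$. Apply the visibility axiom at $p$ with $\varepsilon=\theta_{0}/2$ to obtain a radius $R=R(\theta_{0}/2,p)$. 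If $d(p,\alpha_{k})>R$ then the visibility axiom forces $\measuredangle_{p}(\gamma_{x}(t_{k}),\gamma_{y}(t_{k}))\leq \theta_{0}/2$, contradicting the fact that this angle equals $\theta_{0}$. Therefore for every $k$ we may pick $q_{k}\in\alpha_{k}\cap\overline{B(p,R)}$.

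Next I would extract the limit. Parametrize $\alpha_{k}$ by arclength with $\alpha_{k}(0)=q_{k}$, so that $\alpha_{k}(-a_{k})=\gamma_{x}(t_{k})$ and $\alpha_{k}(b_{k})=\gamma_{y}(t_{k})$ for some $a_{k},b_{k}>0$. Since $d(p,q_{k})\leq R$, the triangle inequality gives
\[
a_{k}=d(q_{k},\gamma_{x}(t_{k}))\geq t_{k}-R,\qquad b_{k}=d(q_{k},\gamma_{y}(t_{k}))\geq t_{k}-R,
\]
so $a_{k},b_{k}\to\infty$. The set of unit tangent vectors based in $\overline{B(p,R)}$ is compact, so after passing to a subsequence $q_{k}\to q$ and $\alpha_{k}'(0)\to v\in T_{q}M$. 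Let $\alpha$ be the geodesic with $\alpha(0)=q$, $\alpha'(0)=v$; by smooth dependence on initial conditions, $\alpha_{k}\to\alpha$ uniformly on every compact subset of $\mathbb R$.

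Finally, for any fixed $T>0$, eventually $a_{k},b_{k}>T$, so $\alpha_{k}|_{[-T,T]}$ is a minimizing geodesic between its endpoints. Passing to the limit, $d(\alpha(-T),\alpha(T))=2T$, so $\alpha|_{[-T,T]}$ is minimizing. Since $T$ is arbitrary, $\alpha:\mathbb R\to M$ is a line. The main (and essentially only) obstacle is the first step: without the visibility axiom the segments $\alpha_{k}$ could in principle escape to infinity, and one would have no basepoint around which to take a limit. Once visibility anchors them near $p$, the rest is a standard Arzelà--Ascoli / limit-of-minimizers argument.
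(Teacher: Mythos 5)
Your proposal is correct and follows essentially the same route as the paper: use the fixed angle $\measuredangle(\gamma_x'(0),\gamma_y'(0))>0$ together with the visibility axiom at $p$ to force $d(p,\alpha_k)\le R$, then extract a subsequential limit by compactness and note that the limit of minimizers with endpoints diverging in both directions is a line. Your write-up merely fills in the standard Arzel\`a--Ascoli details that the paper leaves implicit.
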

\begin{proof}
Since $\measuredangle_{p}(\gamma_{x}(t_k), \gamma_{y}(t_k))$ equals to $\measuredangle(\gamma_{x}'(0), \gamma_{y}'(0))=\varepsilon_0>0$, it follows from the definition of visibility at $p$ that there exists an $R=R(p, \varepsilon_0)$ such that
$$
d(p, \alpha_{k})\le R,
$$
for $k\in \mathbb N$. Since closed balls of bounded radius are compact, the sequence of segments  $\{\alpha_{k}\}_{k=1}^{n}$ subconverges to a line.
\end{proof}
  
If $M$ is a plane, then the limiting line $\alpha$ stays inside the open convex region bounded by the rays $\gamma_{x}$ and $\gamma_{y}$. Since this is the 2-dimensional case, we will switch from $M$ to $S$ to denote it in what follows.

\begin{proof}[Proof of Proposition \ref{prop:visi_hyp}]
We will see that, given $\gamma\in\mathcal{F}$, there is some line $\sigma$ such that $(\gamma,\sigma)$ is a weak hyperbolic pair.
$\gamma$ separates $S$ into two open half planes. Let $\alpha_{+}$ and $\alpha_{-}$ be two different rays emanating from $p=\gamma(0)$ and lying within one component. By Proposition \ref{prop:line}, we can construct a line $\sigma$ which lies within the sector bounded by $\alpha_{\pm}$. On the other hand, from Proposition \ref{prop:boundary}, $\gamma:[0,\infty)\to S$ and $\alpha_+$ are not asymptotes. Thus, there is a sequence $t_k\to\infty$ such that
\[
d(\gamma(t_k),\alpha_+(t_k))\to\infty
\] 
  and therefore
$$d(\sigma(t_k), \gamma(t_k))>d(\alpha_{+}(t_k), \gamma(t_k))\to\infty.$$ 
A similar argument for $\sigma$ and $\gamma$ at $-\infty$ proves that $(\gamma, \sigma)$ is a weak hyperbolic pair. 
\end{proof}

\section{Proofs of Theorems
\ref{thm:flat} and \ref{thm:visib_foliation}}
\begin{proof}[Proof of Theorem \ref{thm:flat}]
Let $\gamma \in \mathcal{F}$ be a line. Suppose $c(S)<0$; then by Proposition \ref{prop:total_hyp}, there exists a line $\sigma$ such that $(\gamma, \sigma)$ is a hyperbolic pair. However this is impossible. In fact for any point $x_{0} \not\in S_{\gamma}^{\sigma}$ and $x\in H_{\sigma}^{+}$, there must be a leave $\bar\gamma$ passing through it and with bounded distance to $\gamma$, see Figure \ref{HP}. Let
$$
C=\dist_{H}(\bar\gamma, \gamma),
$$
be the Hausdorff distance between the two leaves, and parametrize $\gamma$ and $\bar\gamma$ such that $\dist(\gamma(0), \bar\gamma(0))\leq C$. Then after possible reversing the parameter of one of the lines, we have
$$
\lim_{t\to\infty}\dist(\gamma(t), \bar\gamma(t))\le 3C.
$$
In fact, for any $t$, there exists $s$ such that $\dist(\gamma(t), \bar\gamma(s))\le C$ by the definition of Hausdorff distance. Note that the $\gamma$ and $\bar\gamma$ are lines, therefore any two points on one line must realize the distance between them, it follows from the triangle inequality that 
$t\leq s+2C$, and $s\leq t+2C$, thus $|s-t|\leq 2C$.
 Therefore 
$$\dist(\gamma(t),\bar\gamma(t))\le \dist(\gamma(t),\bar\gamma(s))+|t-s|\le 3C.$$
By the definition of hyperbolic pair, $\bar\gamma$ must exit $ H_{\sigma}^{+}$, so $\bar\gamma$ must intersect $\sigma$ twice, this contradicts to the fact that geodesics do not bifurcate in Riemannian manifold. Therefore it follows that $c(S)=0$ and by Proposition \autoref{prop:Hopf}, $S$ is isometric to the flat $\mathbb R^{2}$.
\end{proof}

\begin{figure}\label{fig:hyp_proof}
\begin{tikzpicture}
\shade [top color=gray!10, bottom color=gray!20](0,0)[dashed]--(0,2)--(0,2)arc(-131.4:-48.6:6cm)--(8,0)--cycle;
   \draw (0,0)--(8,0)node[right]{$\gamma$};
   \draw (0,1)--(8,1)node[right]{$\bar\gamma$};
   \draw (4,0.5) arc (-90:-48.6:6cm)node[right]{$\sigma$};
   \draw (4,0.5) arc (-90:-131.4:6cm);
   \draw node at (4,1)[circle,fill,inner sep=0.1pt]{};
   \draw node at (4,1)[above]{$x_{0}$};
   \draw node at(1.5,0.5){$H_{\gamma}^{\sigma}$};
   \draw node at(4.5,2){$H_{\sigma}^{+}$};
\end{tikzpicture}
\caption{Proof of Theorems \ref{thm:flat} and \ref{thm:visib_foliation}}
\label{HP}
\end{figure}
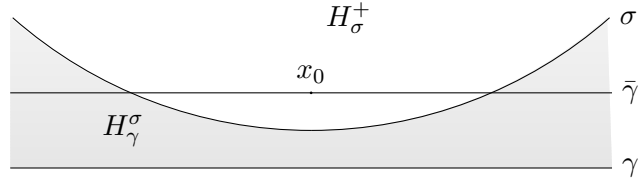

\begin{proof}[Proof of Theorem  \ref{thm:visib_foliation}]
The argument for the first part of the Theorem mimics the previous proof, but using Proposition \ref{prop:visi_hyp} to replace the hyperbolic pair by a weak hyperbolic pair, and working with a sequence $t_k\to\infty$ instead of arbitrary $t\in\R$.  

For the second part of the Theorem, suppose $M$ is the universal cover of a closed surface $\Sigma_{g}$ with genus $g\ge 1$ and without conjugate points.
If $g=1$, then by E. Hopf's theorem for torus without conjugate points, $\Sigma_g$ is a flat torus, and $M$ is the flat plane.  
If $g>1$, then $M$ satisfies the visibility axiom, cf. \cite{Eber1972}. Therefore from the first part of the Theorem, it cannot admit a line foliation with bounded Hausdorff distance.  
\end{proof}

\bibliographystyle{alpha}
\bibliography{ref}
\end{document}